\documentclass[11pt,reqno]{amsart}

\usepackage[margin=1.15in]{geometry}
\usepackage{amsmath,amssymb,mathtools}
\usepackage{microtype}
\usepackage{xcolor}
\usepackage[colorlinks=true,
  linkcolor=blue!45!black,
  citecolor=blue!45!black,
  urlcolor=blue!45!black,
  pdftitle={Metrics with positive Ricci eigenvalue sums on torus-sphere products},
  pdfauthor={Huibin Chen and Yiyue Zhang}]{hyperref}

\numberwithin{equation}{section}
\newtheorem{theorem}{Theorem}[section]
\newtheorem{proposition}[theorem]{Proposition}
\newtheorem{lemma}[theorem]{Lemma}
\newtheorem{corollary}[theorem]{Corollary}
\theoremstyle{remark}
\newtheorem{remark}[theorem]{Remark}

\newcommand{\TT}{\mathbb T}
\newcommand{\SSph}{\mathbb S}
\newcommand{\RR}{\mathbb R}
\DeclareMathOperator{\Ric}{Ric}
\DeclareMathOperator{\Rm}{Rm}
\DeclareMathOperator{\tr}{tr}
\DeclareMathOperator{\spec}{spec}
\DeclareMathOperator{\dist}{dist}
\DeclareMathOperator{\Id}{Id}
\DeclareMathOperator{\Sym}{Sym}
\DeclareMathOperator{\vol}{vol}
\DeclareMathOperator{\diag}{diag}
\newcommand{\RicSum}[1]{\mathcal S_{#1}}
\newcommand{\Bcal}{\mathcal B}
\newcommand{\Mcal}{\mathcal M}
\DeclareMathOperator{\fintavg}{Avg}

\title[Metrics with positive Ricci eigenvalue sums on $\mathbb{T}^k\times \mathbb{S}^m$]
{Metrics with positive Ricci eigenvalue sums on $\mathbb{T}^k\times \mathbb{S}^m$}

\author{Huibin Chen}
\address{School of Mathematical Sciences, Nanjing Normal University, Nanjing 210023, China}
\email{chenhuibin@njnu.edu.cn}
\author{Yiyue Zhang}
\address{
Beijing Institute of Mathematical Sciences and Applications,
Beijing 101408, China
}
\email{zhangyiyue@bimsa.cn}
\date{}

\subjclass[2020]{Primary 53C21; Secondary 53C20, 58J37}
\keywords{$k$-positive Ricci curvature, torus--sphere product,
Kaluza--Klein metric, Ricci eigenvalue sums, spectral perturbation,
harmonic one-form, Bochner formula}

\begin{document}

\begin{abstract}
We construct metrics with $k$-positive Ricci curvature on
$\TT^k\times\SSph^m$, for $k,m\geq2$, arbitrarily close to the
standard flat-round product in the smooth topology. The two-torus
case disproves Wolfson's conjecture that every closed manifold with
two-positive Ricci curvature has virtually free fundamental group.
We also establish a local obstruction to two-positive Ricci curvature
near closed backgrounds carrying three orthonormal parallel one-forms.
\end{abstract}

\maketitle

\section{Introduction}

Let $(M^n,g)$ be a Riemannian manifold. At $p\in M$, write
\[
  \lambda_1(\Ric_g^\sharp(p))\leq\cdots\leq\lambda_n(\Ric_g^\sharp(p))
\]
for the eigenvalues of the Ricci endomorphism. For $1\leq k\leq n$, set
\[
  \RicSum{k}(g,p)=\sum_{\ell=1}^k\lambda_\ell(\Ric_g^\sharp(p)).
\]
Following Wolfson~\cite{Wo2011}, we say that $g$ has \emph{$k$-positive Ricci curvature} if $\RicSum{k}(g,p)>0$ at every point $p\in M$. Ky Fan's minimum principle gives
\begin{equation}
 \RicSum{k}(g,p)=
 \min_{\substack{u_1,\ldots,u_k\in T_pM\\
 |u_\ell|=1,\ \langle u_\ell,u_q\rangle=0\ (\ell\neq q)}}
 \sum_{\ell=1}^k\Ric_g(u_\ell,u_\ell).
 \label{eq:ky-fan}
\end{equation}
Thus $\RicSum{k}>0$ is equivalent to positivity of the Ricci trace on every $k$-plane. Lower bounds on $2$-Ricci curvature are used
in~\cite{HiKaKhZh2025,Zh2021} to obtain a sharper bound on the injectivity
radius of the Clifford torus in $\mathbb{S}^3$. This condition differs in general from intermediate Ricci curvature,
bi-Ricci curvature, and positivity conditions on the curvature operator.
For bi-Ricci curvature, see Shen and Ye~\cite{ShYe1996,ShYe1997};
for the intermediate curvature conditions of Brendle, Hirsch, and Johne,
see~\cite{BrHiJo2024}. 

Recently, Chen and Zhang~\cite{ChZh2026} constructed metrics with positive
biorthogonal curvature on $\SSph^2\times\TT^2$ arbitrarily close to
the standard product in the smooth topology. Their result concerns a
different curvature condition but likewise shows that the fundamental
group need not be virtually free.

For $k\geq1$, let $\TT^k=(\RR/2\pi\mathbb Z)^k$ carry its standard flat metric, and let $g_0^{(k)}$ be the flat--round product metric on $\TT^k\times\SSph^m$. Its Ricci spectrum consists of $k$ zero eigenvalues and $m$ eigenvalues equal to $m-1$. In particular, $\RicSum{k}(g_0^{(k)})=0$. The first main result perturbs this equality to strict positivity.

\begin{theorem}
\label{thm:construction}
Let $m\geq2$ and $k\geq2$. There are $s_0>0$ and a smooth family of metrics $g_s^{(k)}$ on $\TT^k\times\SSph^m$, defined for $0\leq s<s_0$, with $g_s^{(k)}|_{s=0}=g_0^{(k)}$ and $g_s^{(k)}\to g_0^{(k)}$ in $C^\infty$ as $s\to0$, such that, uniformly in $p\in\TT^k\times\SSph^m$,
\begin{equation}\label{eq:construction-asymptotic}
 \RicSum{k}(g_s^{(k)},p)
 =\frac{s^6}{4(m+1)}+O(s^8).
\end{equation}
In particular, $\RicSum{k}(g_s^{(k)})>0$ for every sufficiently small $s>0$. The construction is active in two torus directions; the remaining $k-2$ directions form a flat product factor.
\end{theorem}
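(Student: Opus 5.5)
Reduce to $k=2$: on $\TT^k\times\SSph^m=\TT^{k-2}\times(\TT^2\times\SSph^m)$ set $g_s^{(k)}=g_{\mathrm{flat}}^{(k-2)}+g_s^{(2)}$. The Ricci spectrum of this product is the spectrum for $g_s^{(2)}$ together with $k-2$ zeros. Once $g_s^{(2)}$ has one eigenvalue $\lambda_1=O(s^2)$ slightly negative, one eigenvalue $\lambda_2$, and all others near $m-1$, the $k$ smallest eigenvalues of the product are $\lambda_1,\lambda_2$ and the $k-2$ zeros. Hence $\RicSum{k}(g_s^{(k)})=\RicSum{2}(g_s^{(2)})$, and the whole theorem reduces to the case $k=2$.

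For $k=2$ I would use a Kaluza--Klein ansatz, in line with the keywords. Write $\TT^2$ with coordinates $(x,y)$ and take
\[
g_s=dx^2+\bigl(dy+s\,\theta\bigr)^2+g_{\SSph^m}+s^2h,
\]
where $\theta$ is a one-form on $\SSph^m\times S^1_x$, for instance $\theta=\phi(\sigma)\,dx$ or $\theta$ built from a Killing/conformal field on $\SSph^m$ depending on $x$. Here $h$ is a correction (a warping of the $x$-circle and of the sphere by first spherical harmonics) to be fixed in later orders.

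The Kaluza--Klein formulas give $\Ric(\partial_y,\partial_y)=\tfrac{s^2}{4}|d\theta|^2\geq0$ and $\Ric$ on the horizontal space equal to $\Ric_{\mathrm{base}}-\tfrac{s^2}{2}(d\theta)^{2}$, plus mixed terms $\tfrac s2\,\delta d\theta$. The $y$-direction gains curvature $\tfrac{s^2}{4}|F|^2$ with $F=d\theta$, and the $x$-direction loses roughly $\tfrac{s^2}{2}|\iota_{\partial_x}F|^2$. A naive choice therefore yields a sum of order $s^2$ that does not have a sign. The free functions are chosen to cancel the order $s^2$ and $s^4$ terms of $\RicSum2$ pointwise. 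This is done by solving a linear equation on $\SSph^m$: its solvability condition is orthogonality to the constants and the first eigenspace, which are handled using the kernel of $\Delta+$ const. The leftover is then a positive square at order $s^6$. After averaging over the sphere with $\int_{\SSph^m}\sigma_i^2=\vol/(m+1)$, this produces the constant $1/(4(m+1))$.

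To compute $\RicSum2$ I would use the Ky Fan formula \eqref{eq:ky-fan} with degenerate perturbation theory. At $s=0$ the two-dimensional kernel $E_0=\mathrm{span}(\partial_x,\partial_y)$ is separated from the rest of the spectrum by a gap $m-1$. So the sum of the two lowest eigenvalues is $\tr(P\Ric P)$ on the perturbed invariant subspace. Its expansion is $\tr_{E_0}\Ric-\tr\bigl(B(\Ric_{\mathrm{sphere}}-\lambda)^{-1}B^*\bigr)+\cdots$, where $B$ is the off-diagonal block of size $O(s)$. This yields a smooth expansion in $s^2$ whose coefficients are uniform in $p$ by compactness, with remainder $O(s^8)$.

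The main obstacle is the actual choice of $\theta$ and $h$ making the $s^2$ and $s^4$ coefficients vanish identically in $p$, not just on average. The remaining $s^6$ coefficient must then be the stated positive constant. I would first try $\theta$ built from a $1$-form on $\SSph^m$ dual to a first eigenfunction gradient, rotated along $x$, so that $|F|^2$ is constant pointwise. The $s^4$ cancellation would then come from $h$ via the Bochner formula.
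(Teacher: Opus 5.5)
Your reduction to $k=2$ is sound and agrees with the paper. So is your plan to compute $\RicSum{2}$ as the trace of the separated low cluster, corrected by a resolvent term from the mixed block.

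The gap is the construction itself. You never fix $\theta$ and $h$, and you say so yourself. This is not a detail: it is the whole content of the theorem. A correction of the kind you describe can only remove a mean-zero part. For example, warping the $x$-circle by $e^{s^2\psi}$ adds $-s^2\Delta\psi+O(s^4)$ to the Ricci curvature of the unit $x$-direction. The only solvability obstruction for the resulting linear equation is therefore the constants, not the first eigenspace. The mean itself can never be changed. Everything hinges on finding data whose leading cluster coefficient has \emph{strictly positive mean}, and your proposal has no mechanism for this.

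In the natural cases the order-$s^2$ mean is nonpositive. Take $\theta$ pulled back from $\SSph^m$. You may assume $\delta\theta=0$ and write $\theta=\sum_k\theta_k$ with $\delta d\theta_k=\mu_k\theta_k$. The order-$s^2$ coefficient is then $\tfrac14F_{ij}F^{ij}-\tfrac{1}{4(m-1)}|\delta F|^2$, where the second term comes from mixing $\partial_y$ with the sphere directions. Its mean is
\[
\sum_k\frac{\mu_k\bigl(2(m-1)-\mu_k\bigr)}{4(m-1)}\fintavg_{\SSph^m}|\theta_k|^2\,d\mu\;\le\;0,
\]
since $\mu_k\geq2(m-1)$, with equality only for Killing forms. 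Reading your suggestion as $\theta=\cos x\,d\sigma_1+\sin x\,d\sigma_2$ is worse. Then $\tfrac14F_{ij}F^{ij}-\tfrac12|\iota_{\partial_x}F|^2\equiv0$, while the mixing term $-\tfrac{1}{4(m-1)}|\cos x\,d\sigma_1+\sin x\,d\sigma_2|^2$ has strictly negative mean. No warping can repair that.

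Your single-scale ansatz compounds the problem. Even with the optimal choice above (Killing $\theta$, zero mean at order $s^2$), the quartic connection terms enter at order $s^4$, far above your target $s^6$. Their means, and those of the order-$s^6$ terms they generate together with $h$, would have to vanish exactly, and nothing in the proposal controls them.

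Moreover, the theorem requires the $s^6$ coefficient to equal $\frac1{4(m+1)}$ at \emph{every} point. A pointwise ``positive square'' is not the goal. The constant must be the mean that survives a further mean-removing correction, and you compute no such mean.

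The paper supplies the missing ingredients:
\begin{itemize}
\item It twists both torus directions over $\SSph^m$.
\item It deforms the fiber shape unimodularly at scale $s$, via $H_s=e^{sK}$ with $K$ traceless and built from first harmonics.
\item It uses a \emph{separate} connection strength $t$, with curvature $F_s=F-\tfrac32sKF$ and $F$ of Killing type.
\item Since $D_s=\tfrac m2sK+O(s^3)\neq0$, the $t^2$ coefficient $q_s$ is computed through the Sylvester equation. An explicit spherical computation then gives its mean $c_s=\frac{s^2}{4(m+1)}+O(s^4)$.
\item The fiber-volume factor $e^{t^2\phi_s}$ with $\Delta\phi_s=q_s-c_s$ replaces $q_s$ by $c_s$ pointwise (Lemma~\ref{lem:poisson-shift}).
\item Finally, $t=s^2$ makes $t^2c_s\sim s^6$ dominate the $O(t^4)=O(s^8)$ remainder.
\end{itemize}
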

Within the family $\TT^k\times\SSph^{n-k}$, with $1\leq k\leq n-1$,
the two excluded endpoints have topological obstructions. When $k=1$,
the condition $\RicSum{1}>0$ is positive Ricci curvature, which is
incompatible with the infinite fundamental group by the Bonnet--Myers
theorem. When $k=n-1$, the product is $\TT^n$. Positivity of
$\RicSum{n-1}$ forces $\lambda_{n-1}>0$, hence $\lambda_n>0$ and
positive scalar curvature. This is impossible on a torus~\cite{GrLa1980}.

Cucinotta and Mondino~\cite[Theorem~1.6]{CuMo2026} prove first
Betti number bounds under almost nonnegative Ricci curvature,
diameter and noncollapsing bounds, and a fixed positive lower bound
for normalized local curvature integrals. The allowed negative
Ricci tolerance depends on this positive lower bound. In our family,
$\RicSum{k}$ converges uniformly to zero, so the relevant integral
averages tend to zero; the fixed positive integral hypothesis
therefore does not hold uniformly.

The case $k=2$ has an immediate topological consequence. Wolfson~\cite[Conjecture~1]{Wo2011} conjectured that the fundamental group of a closed manifold with two-positive Ricci curvature must be virtually free. Ramachandran and Wolfson~\cite{RaWo2010} proved this conclusion under a bounded fill-radius assumption on the universal cover, while the proposed implication from curvature to fill radius remained conjectural.

\begin{corollary}
For every $m\geq2$, the closed manifold $\TT^2\times\SSph^m$ admits two-positive Ricci metrics arbitrarily $C^\infty$-close to the flat--round product. Consequently, two-positive Ricci curvature does not force the fundamental group of a closed manifold to be virtually free.
\end{corollary}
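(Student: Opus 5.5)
The corollary follows directly from Theorem~\ref{thm:construction} with $k=2$, together with a standard fact about fundamental groups.

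\textbf{Existence of the metrics.} For $m\ge 2$, Theorem~\ref{thm:construction} supplies a smooth family $g_s^{(2)}$ on $\TT^2\times\SSph^m$ with $g_s^{(2)}\to g_0^{(2)}$ in $C^\infty$ as $s\to0$. The expansion \eqref{eq:construction-asymptotic} holds uniformly in $p$, so there is a constant $C$ with
\[
\RicSum{2}(g_s^{(2)},p)\ \ge\ \frac{s^6}{4(m+1)}-Cs^8 \quad\text{for all } p .
\]
The right-hand side is positive once $s<\min\{s_0,(4C(m+1))^{-1/2}\}$. Given any $C^\infty$-neighbourhood of $g_0^{(2)}$ (say a ball in finitely many $C^r$ seminorms), $C^\infty$ convergence lets us take $s$ smaller still so that $g_s^{(2)}$ lies in it. This metric is two-positive. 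Compactness of $\TT^2\times\SSph^m$ is used only implicitly, through the uniformity of the estimate.

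\textbf{Fundamental group.} Since $m\ge2$, $\SSph^m$ is simply connected, so
\[
\pi_1(\TT^2\times\SSph^m)\cong\mathbb Z^2 .
\]
We claim $\mathbb Z^2$ is not virtually free. A finite-index subgroup $H\le\mathbb Z^2$ is again free abelian of rank $2$. If $H$ were free, then being abelian it would have rank at most $1$, so $H\cong\mathbb Z$ or $H=1$. Either way this contradicts rank $2$. (Alternatively, virtually free groups have virtual cohomological dimension at most $1$, whereas $\mathbb Z^2$ has cohomological dimension $2$.)

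So $\TT^2\times\SSph^m$ is a closed manifold carrying a two-positive Ricci metric whose fundamental group is not virtually free. This contradicts Wolfson's conjecture.

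All the substantive work sits in Theorem~\ref{thm:construction}. The only point needing care here is that uniformity in $p$ is what turns the asymptotic expansion into strict pointwise positivity for a single fixed small $s$.
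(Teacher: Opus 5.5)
Your proposal is correct and follows the paper's proof: apply Theorem~\ref{thm:construction} with $k=2$, then note that every finite-index subgroup of $\pi_1(\TT^2\times\SSph^m)=\mathbb Z^2$ is again $\mathbb Z^2$, which is not free. Your additional details are accurate elaborations of steps the paper leaves implicit, namely that uniformity in $p$ yields strict positivity at a fixed small $s$ and that $\SSph^m$ is simply connected for $m\geq2$.
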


\begin{proof}
Apply Theorem~\ref{thm:construction} with $k=2$. Every finite-index subgroup of $\mathbb Z^2$ is isomorphic to $\mathbb Z^2$, which is not a free group. Hence $\pi_1(\TT^2\times\SSph^m)=\mathbb Z^2$ is not virtually free.
\end{proof}

For two-positive Ricci curvature, the local behavior changes when a
third parallel direction is present. The complementary result below
applies to any closed connected background carrying three orthonormal
parallel one-forms. In particular, every sufficiently small
$C^{2,\alpha}$ perturbation of the standard flat--round product metric
on $\TT^3\times\SSph^m$ fails to have two-positive Ricci curvature.

\begin{theorem}\label{thm:obstruction-intro}
Let $(M,g_0)$ be a closed connected Riemannian manifold admitting three pointwise $g_0$-orthonormal parallel one-forms. For every $0<\alpha<1$, there is a $C^{2,\alpha}$-neighborhood $\mathcal U$ of $g_0$ in the space of smooth metrics such that no $g\in\mathcal U$ satisfies $\RicSum{2}(g)>0$ everywhere.
\end{theorem}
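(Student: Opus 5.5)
The approach is a Bochner argument for $g$-harmonic one-forms in the three cohomology classes carried by the parallel forms. The pointwise input is linear algebra: $\RicSum{2}>0$ forces the trace of $\Ric_g$ against any nearly orthonormal triple of covectors to be positive. The integral input is that for a harmonic one-form the integral of $\Ric_g(\omega,\omega)$ is nonpositive.

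\emph{Step 1: harmonic representatives.} Let $\theta_1,\theta_2,\theta_3$ be the given $g_0$-orthonormal parallel one-forms. They are closed and $g_0$-coclosed, hence $g_0$-harmonic. For a metric $g$ near $g_0$, put $\omega_i=\theta_i-d\varphi_i$, where $\varphi_i$ solves $\Delta_g\varphi_i=\delta_g\theta_i$ with $\int\varphi_i\,d\vol_g=0$. This equation is solvable because $\int\delta_g\theta_i\,d\vol_g=0$ and $M$ is closed and connected. Then $\omega_i$ is closed and $g$-coclosed, so it is $g$-harmonic.

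Since $\delta_{g_0}\theta_i=0$ and $\delta_g\theta_i$ depends on $g$ and its first derivatives, we have $\|\delta_g\theta_i\|_{C^{1,\alpha}}\le C\|g-g_0\|_{C^{2,\alpha}}$. Schauder estimates for $\Delta_g$ on functions are uniform for $g$ in a small $C^{2,\alpha}$-ball. They give $\|d\varphi_i\|_{C^{2,\alpha}}\le C\|g-g_0\|_{C^{2,\alpha}}$. Consequently, given $\varepsilon>0$, we can shrink $\mathcal U$ so that at every point the Gram tensor
\[
G=\sum_{i=1}^3\omega_i^{\sharp_g}\otimes\omega_i^{\sharp_g}
\]
has rank $3$ and satisfies $1-\varepsilon\le G(w,w)\le1+\varepsilon$ for every $g$-unit vector $w$ in $V=\operatorname{range}G$. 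Here $G$ is $C^0$-close to the $g$-projection onto the span of the $\theta_i^{\sharp}$.

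\emph{Step 2: Bochner identity.} For each $g$-harmonic one-form $\omega$ we have $\tfrac12\Delta|\omega|^2=|\nabla\omega|^2+\Ric_g(\omega^\sharp,\omega^\sharp)$. This requires only $g\in C^{2,\alpha}$, so that $\Ric_g$ is continuous. Integrating over the closed manifold $M$ and summing over $i$ gives
\[
\int_M \tr\bigl(\Ric_g\circ G\bigr)\,d\vol_g=-\sum_{i=1}^3\int_M|\nabla\omega_i|^2\,d\vol_g\le0 .
\]

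\emph{Step 3: pointwise positivity (the main step).} Suppose $\RicSum{2}(g)>0$ everywhere. At a point, let $\nu_1\le\nu_2\le\nu_3$ be the eigenvalues of $\Ric_g$ restricted to the $3$-plane $V$, with $g$-orthonormal eigenvectors $w_j\in V$. By Cauchy interlacing, $\nu_j\ge\lambda_j$, so $\nu_1+\nu_2\ge\RicSum{2}>0$, and in particular $\nu_2,\nu_3>0$. Since $G$ is supported on $V$, we have $\tr(\Ric_g\circ G)=\sum_j\nu_jG(w_j,w_j)$, and the off-diagonal entries of $G$ do not contribute. Write $\gamma_j=G(w_j,w_j)\in[1-\varepsilon,1+\varepsilon]$.

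If $\nu_1\ge0$, the sum is at least $\gamma_2\nu_2>0$. If $\nu_1<0$, then $\nu_3\ge\nu_2>|\nu_1|$, so
\[
\sum_j\gamma_j\nu_j\ge(1-\varepsilon)(\nu_2+\nu_3)-(1+\varepsilon)|\nu_1|>(1-3\varepsilon)|\nu_1|\ge0
\]
for $\varepsilon\le1/3$. Hence $\tr(\Ric_g\circ G)>0$ everywhere, and its integral is strictly positive. This contradicts Step 2, so no $g\in\mathcal U$ satisfies $\RicSum{2}(g)>0$ everywhere.

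I expect the delicate point to be Step 3. A naive bound in terms of $\RicSum{2}$ alone fails, because $\RicSum{2}$ may be arbitrarily small while $\Ric_g$ has a negative eigenvalue of size $O(1)$. The argument must therefore exploit that at most one restricted eigenvalue can be negative and that it is strictly dominated by each of the other two. The uniform Schauder control in Step 1, with constants independent of $g$ in the neighborhood, is routine but must be stated with care.
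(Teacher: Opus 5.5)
Your proposal is correct and follows essentially the same route as the paper. The shared steps are $g$-harmonic representatives with uniform Schauder control, compression of $\Ric_g$ to the $3$-plane spanned by the dual fields, and the integrated Bochner identity; your case split with $\gamma_j\in[1-\varepsilon,1+\varepsilon]$ is an inline form of the paper's bound $\tr Q-\|S-\Id_P\|\tr|Q|>0$ from $\tr|Q|<3\tr Q$ and $\|S-\Id_P\|<1/3$, and Cauchy interlacing replaces Ky Fan. The only fix needed is the Laplacian convention: Step 1 requires $\Delta_g=\delta_g d$ for $\theta_i-d\varphi_i$ to be coclosed, whereas the Bochner formula in Step 2 uses $\Delta=\nabla^i\nabla_i$.
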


We now outline the construction of the metrics in
Theorem~\ref{thm:construction}.
We first work on $\TT^2\times\SSph^m$. We deform the metric on each
torus fiber, with parameter $s$, while keeping its volume fixed.
At this stage, the sum of the two small Ricci eigenvalues remains
zero. We then introduce a connection perturbation of strength $t$,
which couples the torus and sphere directions. The coefficient of
$t^2$ in this eigenvalue sum has positive average over the sphere,
but need not be positive everywhere. To obtain pointwise positivity,
we adjust the fiber volume by solving a Poisson equation. This
replaces the coefficient by its positive average. Choosing $t=s^2$
makes the remaining error smaller than the positive leading term.
Finally, taking the product with a flat $(k-2)$-torus gives the
construction on $\TT^k\times\SSph^m$.

The obstruction uses harmonic representatives of three parallel
cohomology classes. For nearby metrics these remain close to an
orthonormal frame. A three-dimensional trace estimate then makes the
sum of their Ricci terms strictly positive, contradicting the
integrated Bochner formula.

Sections~\ref{sec:kk} and~\ref{sec:cluster} establish the Ricci and
spectral perturbation formulas. Sections~\ref{sec:spherical}
and~\ref{sec:correction} construct the metrics and prove
Theorem~\ref{thm:construction}; Appendix~\ref{app:algebra} supplies
the contraction calculations used in that proof.
Section~\ref{sec:obstruction} proves
Theorem~\ref{thm:obstruction-intro}.

\medskip

\noindent
\textbf{AI usage:} The authors used ChatGPT extensively in developing this manuscript and generating the main counterexamples. We independently checked and rewrote the proofs and take full responsibility for the content.

\medskip

\noindent
\textbf{Acknowledgements:}
YZ thanks Professor Schoen for many helpful
discussions and suggestions.  YZ was partially supported by NSFC Grant
No.\ 12501070 and by the startup fund from BIMSA. HB was partially supported by NSFC Grant
No.\ 12571025.

\section{Torus-invariant Ricci formulas}\label{sec:kk}

We use the Einstein summation convention, with $i,j,\ell,\ldots$ denoting
base indices, raised using $h$, and $a,b,c,\ldots$ denoting fiber
indices, raised using $H$ when they are tensor indices. The
coefficient-matrix labels introduced below are an exception to this
convention. Our curvature and Laplacian conventions are
\begin{align}
\Rm(X,Y)Z
&=\nabla_X\nabla_YZ-\nabla_Y\nabla_XZ-\nabla_{[X,Y]}Z,
\label{eq:curvature-convention}\\
\Ric(Y,Z)
&=\sum_\gamma
\bigl\langle \Rm(e_\gamma,Y)Z,e_\gamma\bigr\rangle,
\qquad
\Delta=\nabla^i\nabla_i,
\notag
\end{align}
where $\{e_\gamma\}$ is a local orthonormal frame.
With these conventions, the unit sphere satisfies
\[
\Ric_{\SSph^m}=(m-1)g_{\SSph^m},
\qquad
\Delta x_\alpha=-mx_\alpha
\]
for each ambient coordinate function restricted to $\SSph^m$.
We normalize the inner product on two-forms by
\begin{equation}
\langle P,Q\rangle_{\Lambda^2}
=\frac12P_{ij}Q^{ij}.
\label{eq:two-form-convention}
\end{equation}
For columns of two-forms, the unweighted pairing is the componentwise
Euclidean pairing:
\[
  \langle P,Q\rangle:=\sum_{a=1}^k
  \langle P^a,Q^a\rangle_{\Lambda^2},
  \qquad |P|^2:=\langle P,P\rangle.
\]
Let $(B^m,h)$ be closed. Fix a smooth map $H=(H_{ab}):B\to\Sym^+(k)$ and an $\RR^k$-valued one-form $\alpha=(\alpha^1,\ldots,\alpha^k)^T$. Set $F^a=d\alpha^a$ and consider the torus-invariant metric
\begin{equation}
 g_t=h+H_{ab}\eta^a\eta^b,
 \qquad
 \eta^a=d\theta^a+t\alpha^a.
 \label{eq:kk-metric}
\end{equation}
Here $H_{ab}\eta^a\eta^b$ denotes the corresponding symmetric two-tensor. Define
\begin{equation}
 \nu=(\det H)^{1/2},
 \qquad
 L_i=\frac12H^{-1}\nabla_iH,
 \qquad
 (L_i)^b{}_a=\frac12H^{bc}\nabla_iH_{ca}.
 \label{eq:nu-L-definitions}
\end{equation}
The function $\nu$ is the fiber-volume density. Taking the trace in \eqref{eq:nu-L-definitions} and using $\nabla_i\log\det H=\tr(H^{-1}\nabla_iH)$, we obtain
\begin{equation}
 \tr L_i=\nabla_i\log\nu.
 \label{eq:trace-L}
\end{equation}

For a column $G=(G^1,\ldots,G^k)^T$ of two-forms, set
\[
  \begin{aligned}
    \Bcal(G)_{ia}&=\nabla^jG^a_{ij},\\
    \Mcal(G)_{ab}&=G^a_{ij}G^{b\,ij}.
  \end{aligned}
\]
Here the lower labels $a,b$ in $\Bcal(G)_{ia}$ and
$\Mcal(G)_{ab}$ denote matrix slots in the fixed torus frame; they
do not lower the fiber indices of $G$ using $H$. The divergence is taken componentwise, and $\Mcal(G)$ records
pairwise contractions of the components. We regard $\Bcal(G)$ as a
map $\RR^k\to TB$ after raising only the base index. With the Hodge convention $(\delta G^a)_i=-\nabla^jG^a_{ji}$, antisymmetry gives $\Bcal(G)_{ia}=(\delta G^a)_i$.

We next choose an adapted frame. Write $U_a=\partial_{\theta^a}$ for the coordinate fields along the torus. Given a vector field $e_i$ on $B$, let
\[
  E_i=e_i-t\alpha^a(e_i)U_a
\]
be its horizontal lift. Then
\[
  \eta^a(E_i)=0, \qquad \eta^a(U_b)=\delta^a_b,
\]
and hence
\[
  \begin{aligned}
    g_t(E_i,E_j)&=h(e_i,e_j),\\
    g_t(E_i,U_a)&=0,\\
    g_t(U_a,U_b)&=H_{ab}.
  \end{aligned}
\]
Thus the two distributions are orthogonal, although $(U_a)$ is not usually an orthonormal vertical frame.

Writing $\alpha_i^a=\alpha^a(e_i)$, we obtain
\begin{align}
 [E_i,E_j]
 &= [e_i,e_j]
    -t\bigl(e_i(\alpha_j^a)-e_j(\alpha_i^a)\bigr)U_a
 \notag\\
 &= [e_i,e_j]^{\mathrm h}-tF^a_{ij}U_a,
 \qquad
 [E_i,U_a]=0.
 \label{eq:adapted-brackets}
\end{align}
Thus $F$ measures the nonintegrability of the horizontal distribution.

We now compute the Levi--Civita connection. Fix $p\in B$ and choose an $h$-orthonormal frame that is normal at $p$, so
\[
  h(e_i,e_j)(p)=\delta_{ij}, \qquad \nabla^h_{e_i}e_j(p)=0.
\]
Torsion-freeness gives $[e_i,e_j](p)=0$. At $p$ we have
\begin{align}
 \nabla^g_{E_i}E_j
 &=-\frac t2F^a_{ij}U_a,
 \label{eq:koszul-horizontal}\\
 \nabla^g_{E_i}U_a=\nabla^g_{U_a}E_i
 &=(L_i)^b{}_aU_b
   +\frac t2H_{ab}F^b_i{}^jE_j,
 \label{eq:koszul-mixed}\\
 \nabla^g_{U_a}U_b
 &=-\frac12(\nabla^iH_{ab})E_i.
 \label{eq:koszul-vertical}
\end{align}
Here $F^b_i{}^j=h^{j\ell}F^b_{i\ell}$.

The Koszul formula and \eqref{eq:adapted-brackets} give
\[
  \begin{aligned}
    2g(\nabla^g_{E_i}E_j,U_a)&=-tH_{ab}F^b_{ij},\\
    g(\nabla^g_{E_i}E_j,E_\ell)&=0
  \end{aligned}
\]
at the normal point. For the mixed derivative, they give
\[
  \begin{aligned}
    2g(\nabla^g_{E_i}U_a,U_b)&=\nabla_iH_{ab},\\
    2g(\nabla^g_{E_i}U_a,E_j)&=tH_{ab}F^b_{ij}.
  \end{aligned}
\]
Finally,
\[
  \begin{aligned}
    2g(\nabla^g_{U_a}U_b,E_i)&=-\nabla_iH_{ab},\\
    g(\nabla^g_{U_a}U_b,U_c)&=0.
  \end{aligned}
\]
Raising the paired indices proves \eqref{eq:koszul-horizontal}--\eqref{eq:koszul-vertical}; torsion-freeness gives the equality in \eqref{eq:koszul-mixed}. Away from the normal point, the first formula also contains $(\nabla^h_{e_i}e_j)^{\mathrm h}$.

We use $(E_i)$ to identify the horizontal space with $\RR^m$ and the isometry $\mathcal I_H(z)=(H^{-1/2}z)^aU_a$ to identify the vertical space with $\RR^k$. All transposes and adjoints below refer to these Euclidean identifications. For $(HF)_a=H_{ab}F^b$, the mixed-block notation means
\[
  \Bcal(\nu HF)_{ia}=\nabla^j(\nu H_{ab}F^b_{ij}),
\]
with the fiber components taken in the fixed torus frame.

The following formulas are the torus-invariant reduction formulas;
see Lott~\cite[\S4.2, equation~(4.6)]{Lo2010}. We include a derivation
to fix the signs, normalizations, and frame identifications used here.

\begin{lemma}\label{lem:kk-blocks}
In the orthogonal horizontal--vertical splitting, the Ricci endomorphism has block form
\[
  \Ric^{\sharp}_{g_t}=\begin{pmatrix}A_t&C_t\\ C_t^*&D_t\end{pmatrix}.
\]
Its horizontal, upper-right mixed, and vertical blocks are
\begin{align}
 (A_t)_{ij}
 &=(\Ric_h)_{ij}
   -(\nabla^2\log\nu)_{ij}
   -\tr(L_iL_j)
   -\frac{t^2}{2}H_{ab}F^a_{i\ell}F^b_j{}^\ell,
 \label{eq:A-block}\\
 C_t
 &=\frac{t}{2\nu}\mathcal B(\nu HF)H^{-1/2},
 \label{eq:C-block}\\
 D_t
 &=H^{1/2}\left[-\nu^{-1}\nabla^i(\nu L_i)\right]H^{-1/2}
   +\frac{t^2}{4}H^{1/2}\mathcal M(F)H^{1/2}.
 \label{eq:D-block}
\end{align}
Here $C_t:\RR^k\to T_pB$, and the lower-left block is its Euclidean adjoint $C_t^*$. In \eqref{eq:D-block}, the covariant derivative is the base Levi--Civita derivative on the one-form index of $L$ and acts entrywise on the matrix coefficients.

In particular,
\begin{equation}
 \tr D_t
 =-\nu^{-1}\Delta_h\nu
  +\frac{t^2}{4}H_{ab}F^a_{ij}F^{b\,ij}.
 \label{eq:vertical-trace}
\end{equation}
Set
\[
  \langle HF,F\rangle:=H_{ab}\langle F^a,F^b\rangle_{\Lambda^2}.
\]
The convention \eqref{eq:two-form-convention} gives
\[
  H_{ab}F^a_{ij}F^{b\,ij}=2\langle HF,F\rangle.
\]
Thus the second term in \eqref{eq:vertical-trace} is $\frac{t^2}{2}\langle HF,F\rangle$.
\end{lemma}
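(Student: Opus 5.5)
We will compute the Ricci tensor directly in the adapted frame $(E_i,U_a)$ at a point $p$ where $(e_i)$ is $h$-normal. We use the connection formulas \eqref{eq:koszul-horizontal}--\eqref{eq:koszul-vertical} and the brackets \eqref{eq:adapted-brackets}. Because $(U_a)$ is not orthonormal, we will write
\[
\Ric(Y,Z)=\sum_i\langle \Rm(E_i,Y)Z,E_i\rangle+H^{ab}\langle \Rm(U_a,Y)Z,U_b\rangle,
\]
which is the trace over a $g_t$-orthonormal frame expressed through the inverse fiber metric. The argument splits into three computations.

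\textbf{Vertical--vertical.} We compute $\Ric(U_a,U_b)$. The horizontal part uses $\nabla_{E_i}\nabla_{U_a}U_b$ and $\nabla_{U_a}\nabla_{E_i}U_b$, together with $[E_i,U_a]=0$. Differentiating \eqref{eq:koszul-mixed} and \eqref{eq:koszul-vertical} along $E_i$ produces the second derivatives $-\frac12\nabla^i\nabla_iH_{ab}$ and the quadratic terms in $\nabla H$. The $F$-terms of \eqref{eq:koszul-mixed} pair to give $\frac{t^2}{4}H_{ac}F^c_{ij}F^{d\,ij}H_{db}$. The vertical part of the trace comes from $\nabla_{U_c}\nabla_{U_a}U_b$. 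Since every coefficient is $\theta$-independent, it contributes only $\nabla H\cdot\nabla H$ terms weighted by $H^{-1}$.

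Collecting the terms should give the matrix identity
\[
\Ric(U_a,U_b)=-\tfrac12\nu^{-1}\nabla^i(\nu\nabla_iH)_{ab}+\tfrac12(\nabla^iH\,H^{-1}\nabla_iH)_{ab}+\tfrac{t^2}{4}(H\Mcal(F)H)_{ab},
\]
where the factor $\nu^{-1}\nabla\nu$ arises from $\tr L_i$ via \eqref{eq:trace-L}. Next we note that $-\nu^{-1}\nabla^i(\nu HL_i)+\nabla^iH\,L_i=-H\nu^{-1}\nabla^i(\nu L_i)$. Conjugating by $H^{-1/2}$ on both sides, which is the effect of the isometry $\mathcal I_H$, then yields \eqref{eq:D-block}.

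\textbf{Horizontal--horizontal.} $\Ric(E_j,E_\ell)$ is a standard O'Neill-type computation. The horizontal trace gives $(\Ric_h)_{j\ell}$, with a correction $-\frac{3t^2}{4}H_{ab}F^a_{ji}F^b_\ell{}^i$ coming from the $A$-tensor $\frac t2 F$. The vertical trace $H^{ab}\langle\Rm(U_a,E_j)E_\ell,U_b\rangle$ gives three contributions:
\begin{itemize}
\item $-H^{ab}\langle\nabla_{E_j}\nabla_{E_\ell}U_a,U_b\rangle$-type terms, which produce $-\nabla_j\tr L_\ell-\tr(L_jL_\ell)$, that is, $-(\nabla^2\log\nu)_{j\ell}-\tr(L_jL_\ell)$ by \eqref{eq:trace-L};
\item a $+\frac{t^2}{4}HFF$ term from the mixed connection;
\item nothing further from the torsion and bracket terms.
\end{itemize}
The two $t^2$ contributions combine as $-\frac34+\frac14=-\frac12$, which gives \eqref{eq:A-block}.

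\textbf{Mixed block.} $\Ric(E_i,U_a)$ involves only terms linear in $t$, apart from $t$-independent pieces that cancel by the symmetry of $\nabla^2H$. The main term is $\frac t2\nabla^j(H_{ab}F^b_{ij})$, and the traces of $L$ add $\frac t2(\nabla^j\log\nu)H_{ab}F^b_{ij}$. Together these give $\frac{t}{2\nu}\nabla^j(\nu H_{ab}F^b_{ij})$. The vertical leg is expressed in the fixed frame $U_a$, so passing to an orthonormal vertical frame multiplies on the right by $H^{-1/2}$. This gives \eqref{eq:C-block}, and $C_t^*$ follows from the symmetry of $\Ric$.

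Finally, we take the trace of \eqref{eq:D-block}. Conjugation does not change it, and the identity $\tr L_i=\nabla_i\log\nu$ gives
\[
-\nu^{-1}\nabla^i(\nu\nabla_i\log\nu)=-\nu^{-1}\Delta_h\nu.
\]
Also $\tr(H^{1/2}\Mcal H^{1/2})=H_{ab}F^a_{ij}F^{b\,ij}$, which by \eqref{eq:two-form-convention} equals $2\langle HF,F\rangle$. This proves \eqref{eq:vertical-trace}.

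The main obstacle is the sign and coefficient bookkeeping of the $t^2$ terms in $A_t$ and of the non-symmetric $H$-factors in $D_t$, since $L_i$ is not self-adjoint for the Euclidean structure. We will check the outcome against two cases:
\begin{itemize}
\item $k=1$, $H=1$, the Kaluza--Klein formula $\Ric(E,E)=\Ric_h-\frac{t^2}{2}|F|^2$-type;
\item the warped-product formula $-\nu^{-1}\Delta\nu$ when $t=0$ and $H=\nu^{2/k}\Id$.
\end{itemize}
Agreement in both cases would confirm the normalizations. The result can also be cross-checked against Lott~\cite{Lo2010}.
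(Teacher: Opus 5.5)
Your proposal is correct and follows the paper's route: a direct computation at an $h$-normal point in the adapted frame $(E_i,U_a)$ with the $H^{cd}$-weighted trace, the same $-\frac34+\frac14=-\frac12$ bookkeeping for the $F$-terms in the horizontal block, and the same vertical identity $H^{-1}\Ric^{\mathcal V}=-\nu^{-1}\nabla^i(\nu L_i)+\frac{t^2}{4}\Mcal(F)H$ followed by conjugation with $H^{-1/2}$. One bookkeeping point in the mixed block: there are no $t$-independent pieces to cancel; instead each of the two contractions carries a term $\pm\frac t2 H_{bc}(L_j)^b{}_aF^c_i{}^j$, and these cancel, leaving exactly $\frac{t}{2\nu}\nabla^j(\nu H_{ab}F^b_{ij})$.
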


\begin{proof}
Since $(U_a)$ is not orthonormal, curvature contracts in the adapted frame as
\begin{equation}
 \Ric_g(X,Y)
 =\sum_\ell g\bigl(\Rm(E_\ell,X)Y,E_\ell\bigr)
  +H^{cd}g\bigl(\Rm(U_c,X)Y,U_d\bigr).
 \label{eq:adapted-ricci-contraction}
\end{equation}
At the chosen normal point, substitute \eqref{eq:koszul-horizontal}--\eqref{eq:koszul-vertical} into \eqref{eq:curvature-convention} and use \eqref{eq:adapted-ricci-contraction}. Set $S_{ij}=H_{ab}F^a_{i\ell}F^b_j{}^\ell$. The horizontal and vertical contractions give
\begin{align}
 \sum_\ell g\bigl(\Rm(E_\ell,E_i)E_j,E_\ell\bigr)
 &=(\Ric_h)_{ij}-\frac{3t^2}{4}S_{ij},
 \notag\\
 H^{cd}g\bigl(\Rm(U_c,E_i)E_j,U_d\bigr)
 &=-\nabla_i\tr L_j
   -\tr(L_iL_j)+\frac{t^2}{4}S_{ij}.
 \label{eq:horizontal-curvature-contractions}
\end{align}
Using $\tr L_j=\nabla_j\log\nu$, \eqref{eq:horizontal-curvature-contractions} gives \eqref{eq:A-block}.

For the mixed component, we put
\[
  P_{aij}=H_{ab}F^b_{ij}, \qquad Q_{ia}=H_{bc}(L_j)^b{}_aF^c_i{}^j.
\]
The two contractions are
\begin{align}
 \sum_j g\bigl(\Rm(E_j,E_i)U_a,E_j\bigr)
 &=\frac t2\nabla^jP_{aij}+\frac t2Q_{ia},
 \notag\\
 H^{bc}g\bigl(\Rm(U_b,E_i)U_a,U_c\bigr)
 &=\frac t2(\tr L^j)P_{aij}-\frac t2Q_{ia}.
 \label{eq:mixed-curvature-contractions}
\end{align}
The $Q_{ia}$ terms in \eqref{eq:mixed-curvature-contractions} cancel. Using $\tr L_j=\nabla_j\log\nu$, we get
\begin{align}
 \Ric_g(E_i,U_a)
 &=\frac{t}{2\nu}\nabla^j
   \bigl(\nu H_{ab}F^b_{ij}\bigr).
 \label{eq:adapted-mixed-ricci}
\end{align}

For the vertical--vertical component, the contractions add to
\begin{align}
 \Ric_g(U_a,U_b)
 &=-\frac12\Delta_hH_{ab}
   -\frac12\langle d\log\nu,dH_{ab}\rangle_h
   +\frac12H^{cd}
      \langle dH_{ac},dH_{bd}\rangle_h
 \notag\\
 &\qquad
   +\frac{t^2}{4}H_{ac}H_{bd}
      F^c_{ij}F^{d\,ij}.
 \label{eq:adapted-vertical-ricci}
\end{align}

Evaluating \eqref{eq:adapted-mixed-ricci} on $\mathcal I_H(z)$ multiplies the coordinate mixed block on the right by $H^{-1/2}$ and proves \eqref{eq:C-block}.

Let $\Ric^{\mathcal V}$ be the symmetric matrix in \eqref{eq:adapted-vertical-ricci}. The vertical Ricci endomorphism has coordinate matrix $H^{-1}\Ric^{\mathcal V}$. Differentiating $L_i=\frac12H^{-1}\nabla_iH$ gives
\begin{equation}
 H^{-1}\Ric^{\mathcal V}
 =-\nu^{-1}\nabla^i(\nu L_i)+\frac{t^2}{4}\Mcal(F)H.
 \label{eq:vertical-coordinate-matrix}
\end{equation}
Conjugating \eqref{eq:vertical-coordinate-matrix} by $H^{1/2}$ proves \eqref{eq:D-block}. Equivalently, $D_t=H^{-1/2}\Ric^{\mathcal V}H^{-1/2}$, which is symmetric.

By \eqref{eq:trace-L},
\[
  \tr\bigl(\nu^{-1}\nabla^i(\nu L_i)\bigr)=\nu^{-1}\nabla^i(\nu\nabla_i\log\nu)=\nu^{-1}\Delta_h\nu.
\]
Taking the trace in \eqref{eq:D-block} proves \eqref{eq:vertical-trace}.
\end{proof}

\section{Separated Ricci spectral clusters}\label{sec:cluster}

The following finite-dimensional lemma gives the perturbation formula used below. All spaces are finite-dimensional real Euclidean spaces, all adjoints are Euclidean, and all remainders are measured in operator norm.

\begin{lemma}\label{lem:cluster}
Let $\mathcal H$ and $\mathcal V$ be Euclidean spaces, and let $\mathcal T(t)$ be a $C^4$ family of self-adjoint endomorphisms of $\mathcal H\oplus\mathcal V$, defined for $t$ in a neighborhood of $0$, such that
\begin{equation}
 \mathcal T(t)=
 \begin{pmatrix}
  A+t^2B+O(t^4)&tC+O(t^3)\\
 tC^*+O(t^3)&D+t^2E+O(t^4)
 \end{pmatrix}.
 \label{eq:abstract-block-family}
\end{equation}
Here $A,B\in\Sym(\mathcal H)$, $D,E\in\Sym(\mathcal V)$, and $C\in\operatorname{Hom}(\mathcal V,\mathcal H)$. Assume that
\begin{equation}
 \mathcal T(-t)=J\mathcal T(t)J,
 \qquad
 J=\diag(\Id_{\mathcal H},-\Id_{\mathcal V}),
 \label{eq:cluster-parity}
\end{equation}
and that $\spec(A)\cap\spec(D)=\varnothing$. Let $X:\mathcal V\to\mathcal H$ be the unique solution of the Sylvester equation
\begin{equation}
 AX-XD=C.
 \label{eq:sylvester}
\end{equation}
For sufficiently small $|t|$, the spectral cluster of $\mathcal T(t)$ issuing
from $\spec(D)$---the group of eigenvalues converging to $\spec(D)$,
counted with multiplicity---is well defined, has total multiplicity
$\dim\mathcal V$, and has trace
\begin{equation}
 T_D(t)=\tr D
 +t^2\left(
   \tr E-\tr(C^*X)\right)
 +O(t^4).
 \label{eq:cluster-trace-formula}
\end{equation}
If, in addition,
\begin{equation}
 \max\spec(D)<\min\spec(A),
 \label{eq:ordered-gap}
\end{equation}
then this cluster consists of the $\dim\mathcal V$ lowest eigenvalues of $\mathcal T(t)$ for all sufficiently small $|t|$.
\end{lemma}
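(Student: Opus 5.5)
We will prove Lemma~\ref{lem:cluster} by block-diagonalizing $\mathcal T(t)$ to second order with an explicit near-identity similarity, and then reading off the trace of the $\mathcal V$-block.

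\textbf{Step 1: the cluster.} Since $\mathcal T(0)=\diag(A,D)$ and $\spec(A)\cap\spec(D)=\varnothing$, we separate the two spectra by a contour $\Gamma$ in $\mathbb C$ that encloses $\spec(D)$ and not $\spec(A)$. Continuity of eigenvalues (or the Riesz projection $P(t)=\frac{1}{2\pi i}\oint_\Gamma(z-\mathcal T(t))^{-1}dz$) shows that for small $|t|$ exactly $\dim\mathcal V$ eigenvalues, counted with multiplicity, lie inside $\Gamma$. Their sum is $T_D(t)=\tr(\mathcal T(t)P(t))$. This is a $C^4$ function of $t$, because the resolvent is smooth in $\mathcal T$ on $\Gamma$.

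Under the ordered-gap hypothesis \eqref{eq:ordered-gap}, we choose $\Gamma$ to enclose the interval $[\min\spec(D)-1,\max\spec(D)+\epsilon]$ with $\epsilon$ less than the gap. Eigenvalue continuity (Weyl's inequality) then shows the cluster consists of the $\dim\mathcal V$ lowest eigenvalues.

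\textbf{Step 2: parity.} From \eqref{eq:cluster-parity}, $\mathcal T(-t)$ is orthogonally conjugate to $\mathcal T(t)$, so the two have the same spectrum. The conjugation by $J$ preserves the decomposition and the contour, so $T_D(-t)=T_D(t)$. Hence $T_D$ is even, and its Taylor expansion has no $t$ or $t^3$ terms. The remainder is then $O(t^4)$ by $C^4$ regularity. It therefore suffices to compute the $t^2$ coefficient.

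\textbf{Step 3: the $t^2$ coefficient.} Let $K=\begin{pmatrix}0&X\\-X^*&0\end{pmatrix}$, which is antisymmetric, and conjugate by $e^{tK}$. We get
\[
e^{-tK}\mathcal T e^{tK}=\mathcal T_0+t\bigl(\mathcal T_1+[\mathcal T_0,K]\bigr)+O(t^2).
\]
The off-diagonal part of $[\mathcal T_0,K]$ is $AX-XD$ in the upper-right corner (with the matching adjoint below). Choosing $X$ by \eqref{eq:sylvester}, with the sign convention fixed so that this term cancels $C$, kills the first-order off-diagonal block. The conjugated matrix then has off-diagonal blocks of size $O(t^2)$.

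Its lower-right block is
\[
D+t^2\Bigl(E+\tfrac12\bigl(X^*C+C^*X\bigr)-\tfrac12\bigl(X^*AX-\cdots\bigr)\Bigr),
\]
and using the Sylvester equation we simplify it to $D+t^2\bigl(E-\tfrac12(C^*X+X^*C)\bigr)+O(t^3)$. Taking the trace gives the correction $\tr E-\tr(C^*X)$, since $\tr(X^*C)=\tr(C^*X)$.

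Off-diagonal blocks of size $O(t^2)$ change the cluster trace only by $O(t^4)$. This follows from the Schur complement bound: the correction to the $\mathcal V$-block is (off-diagonal)$^2$ divided by the gap. The resulting formula is \eqref{eq:cluster-trace-formula}.

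A cleaner alternative to the conjugation is second-order Rayleigh--Schrödinger perturbation via the contour. One writes $\tr(\mathcal T P)$ and expands the resolvent to second order. The cross term $\oint\tr\bigl[(z-D)^{-1}C^*(z-A)^{-1}C(z-D)^{-1}\bigr]$ equals $-\tr(C^*X)$ after diagonalizing $A$ and $D$, by the partial fractions $\sum\frac{|C_{ij}|^2}{d_j-a_i}$. This matches $-\tr(C^*X)$ with $X_{ij}=C_{ij}/(a_i-d_j)$.

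\textbf{Main obstacle.} The main obstacle is this sign and normalization bookkeeping in the second-order term: confirming that $AX-XD=C$, rather than $XD-AX$, yields $-\tr(C^*X)$. I would verify it in the scalar case $\mathcal H=\mathcal V=\mathbb R$. There the small eigenvalue of $\begin{pmatrix}a&tc\\tc&d\end{pmatrix}$ is $d-t^2c^2/(a-d)+O(t^4)$. With $X=c/(a-d)$ this equals $d-t^2cX$, which confirms the sign.
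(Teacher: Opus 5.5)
Your proposal is correct and follows the paper's proof. The cluster is defined by a Riesz projection with $T_D(t)=\tr(\mathcal T(t)P(t))$, parity makes $T_D$ even, and an antisymmetric conjugation built from the Sylvester solution lets you read off $\tr E-\tr(C^*X)$ from the $\mathcal V$-block. The paper disposes of the residual by the idempotency relation $P_{\mathcal V}P_2+P_2P_{\mathcal V}=P_2$ for the conjugated projection, rather than by your Schur-complement bound.

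The one fix needed is the sign you flagged. With your $K$ you get $[\mathcal T_0,K]=+\mathcal T_1$, so you must conjugate by $e^{tS}$ with $S=-K=\bigl(\begin{smallmatrix}0&-X\\X^*&0\end{smallmatrix}\bigr)$. After that correction the $t^2$ off-diagonal terms actually vanish, and the $\mathcal V$-block is $D+t^2\bigl(E-\tfrac12(C^*X+X^*C)\bigr)+O(t^3)$.
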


\begin{proof}
Choose orthonormal eigenbases $Au_\alpha=a_\alpha u_\alpha$ and $Dv_\beta=d_\beta v_\beta$. Equation \eqref{eq:sylvester} has the unique solution
\[
  \langle u_\alpha,Xv_\beta\rangle
  =\frac{\langle u_\alpha,Cv_\beta\rangle}{a_\alpha-d_\beta}
\]
because the spectra are disjoint. Consequently,
\[
  \tr(C^*X)=\sum_{\alpha,\beta}
  \frac{|\langle u_\alpha,Cv_\beta\rangle|^2}{a_\alpha-d_\beta}.
\]
Under the ordered gap~\eqref{eq:ordered-gap}, every denominator is
positive. Thus mixing with the horizontal block contributes a
nonpositive term to the second-order trace of the lower cluster.

Set
\[
  \begin{aligned}
    \mathcal T_0&=\diag(A,D),\\
    \mathcal T_1&=\begin{pmatrix}0&C\\C^*&0\end{pmatrix},\\
    S&=\begin{pmatrix}0&-X\\X^*&0\end{pmatrix}.
  \end{aligned}
\]
Then $S^*=-S$ and $[\mathcal T_0,S]=-\mathcal T_1$. The Baker--Campbell--Hausdorff expansion gives
\begin{equation}
 \begin{aligned}
 \widetilde{\mathcal T}(t)&:=e^{-tS}\mathcal T(t)e^{tS}\\
 &=\mathcal T_0+t^2
 \begin{pmatrix}
 B+\frac12(CX^*+XC^*)&0\\
 0&E-\frac12(C^*X+X^*C)
 \end{pmatrix}
 +O(t^3).
 \end{aligned}
 \label{eq:block-diagonalized-family}
\end{equation}
Thus the second-order vertical block in \eqref{eq:block-diagonalized-family} is
\[
  K_{\mathcal V}=E-\frac12(C^*X+X^*C).
\]

After complexification, choose a finite union $\mathcal C$ of
positively oriented closed contours with winding number one on
$\spec(D)$ and zero on $\spec(A)$. For sufficiently small $|t|$,
define the Riesz projection~\cite[Chapter~II]{Ka1995} by
\[
  P(t)=\frac{1}{2\pi i}
  \int_{\mathcal C}(zI-\mathcal T(t))^{-1}\,dz.
\]
The operator $P(t)$ is the orthogonal projection onto the direct
sum of the eigenspaces corresponding to the spectral cluster
issuing from $\spec(D)$. It has rank $\dim\mathcal V$ and depends
$C^4$-smoothly on $t$. The sum of the eigenvalues in this cluster,
counted with multiplicity, is therefore
$
  T_D(t)=\tr\bigl(\mathcal T(t)P(t)\bigr).
$

Set
\[
  \begin{aligned}
    \widetilde P(t)&=e^{-tS}P(t)e^{tS},\\
    P_{\mathcal V}&=\diag(0,\Id_{\mathcal V}).
  \end{aligned}
\]
Equation \eqref{eq:block-diagonalized-family} gives $\widetilde{\mathcal T}'(0)=0$, so differentiating the resolvent integral gives $\widetilde P'(0)=0$. Write
\[
  \widetilde P(t)=P_{\mathcal V}+t^2P_2+O(t^3).
\]
The identity $\widetilde P(t)^2=\widetilde P(t)$ gives $P_{\mathcal V}P_2+P_2P_{\mathcal V}=P_2$, so both diagonal blocks of $P_2$ vanish and $\tr(\mathcal T_0P_2)=0$. It follows that
\[
  T_D(t)=\tr D+t^2\tr K_{\mathcal V}+O(t^3),
\]
where $\tr K_{\mathcal V}=\tr E-\tr(C^*X)$.

By \eqref{eq:cluster-parity}, $P(-t)=JP(t)J$, and hence $T_D(-t)=T_D(t)$. Since $T_D$ is $C^4$, Taylor's theorem improves the remainder to $O(t^4)$ and proves \eqref{eq:cluster-trace-formula}. Under \eqref{eq:ordered-gap}, continuity of the ordered eigenvalues identifies this cluster with the lowest $\dim\mathcal V$ eigenvalues.
\end{proof}

The conclusion of Lemma~\ref{lem:cluster} holds uniformly for smooth
families over a compact parameter space, with a common small
$t$-interval, provided its hypotheses hold fiberwise and the
unperturbed spectra have a uniform positive separation. Indeed,
finitely many local Riesz contours give uniform resolvent bounds,
also in local orthonormal trivializations of Euclidean vector bundles.
Differentiating the resolvent formula through order four then gives
a uniform $O(t^4)$ remainder.

For \eqref{eq:kk-metric}, fiber inversion $\iota(p,\theta)=(p,-\theta)$ satisfies $\iota^*g_t=g_{-t}$. Its differential sends $E_i(-t)$ to $E_i(t)$ and $U_a$ to $-U_a$, so it is represented by $J$ in the adapted orthonormal frames. Naturality and torus invariance of Ricci give \eqref{eq:cluster-parity}. The same argument applies to an even family $H_t$, in particular to $H_t=e^{t^2\phi}H$.

\section{The spherical rank-two perturbation}\label{sec:spherical}

Let $m\geq2$. View $\SSph^m$ as the unit sphere in $\RR^{m+1}$, and let $x,y,z$ be its first three coordinate functions. Set
\begin{equation}
 K=
 \begin{pmatrix}
  x&-y\\
  -y&-x
 \end{pmatrix},
 \qquad
 F=
 \begin{pmatrix}
  dy\wedge dz\\
  dz\wedge dx
 \end{pmatrix}.
 \label{eq:spherical-KF}
\end{equation}
For $u=x^2+y^2$, direct multiplication in \eqref{eq:spherical-KF} gives
\[
  \begin{aligned}
    K^2&=u\Id,\\
    \tr K&=0,\\
    \det(e^{sK})&=e^{s\tr K}=1.
  \end{aligned}
\]
Hence $H_s:=e^{sK}$ is symmetric positive definite and has unit determinant for every $s\in\RR$.

Choose the connection curvature
\[
  F_s=F-\frac32sKF.
\]
The coefficient $-3/2$ maximizes the quadratic spherical average
within the ansatz $F+asKF$; see
Remark~\ref{rem:connection-coefficient} and its verification in
Appendix~\ref{app:algebra}. The resulting $\RR^2$-valued two-form is exact. Indeed,
\begin{align}
 (KF)^1
 &=x\,dy\wedge dz-y\,dz\wedge dx
   =d(xy\,dz),
 \notag\\
 (KF)^2
 &=-y\,dy\wedge dz-x\,dz\wedge dx
   =d\left(\frac{x^2-y^2}{2}\,dz\right).
 \label{eq:KF-exact}
\end{align}
Equation \eqref{eq:KF-exact} shows that $F_s=d\alpha_s$, with $d$ acting componentwise, where
\begin{equation}
 \alpha_s^1=y\,dz-\frac{3s}{2}xy\,dz,
 \qquad
 \alpha_s^2=z\,dx-\frac{3s}{4}(x^2-y^2)\,dz.
 \label{eq:alpha-s}
\end{equation}
Thus \eqref{eq:alpha-s} defines $\alpha_s$ globally on $\SSph^m$, and the perturbation is defined on the trivial product $\SSph^m\times\TT^2$.

The parameter $s$ controls the fiber shape $H_s$ and the connection
profile $\alpha_s$, while $t$ controls the connection strength. We
first expand in $t$ at fixed $s$, then expand the resulting coefficient
in $s$. For small $s$ and $t$, set
\[
  \eta_{s,t}^a=d\theta^a+t\alpha_s^a
\]
and define the preliminary metric
\[
  \widehat g^{\,0}_{s,t}=g_{\SSph^m}+(H_s)_{ab}\eta_{s,t}^a\eta_{s,t}^b.
\]
We use the round metric in the horizontal directions and the $H_s$-orthonormal identification in the vertical directions. The Ricci endomorphism then has the block form
\begin{equation}\label{eq:preliminary-ricci-block-form}
 \Ric_{\widehat g^{\,0}_{s,t}}^\sharp
 =
 \begin{pmatrix}
  A_s+t^2B_s&tC_s\\
  tC_s^*&D_s+t^2E_s
 \end{pmatrix}.
\end{equation}
Since $\det H_s=1$, Lemma~\ref{lem:kk-blocks} gives the coefficients in \eqref{eq:preliminary-ricci-block-form} as
\begin{align}
 (A_s)_{ij}
 &=(m-1)(g_{\SSph^m})_{ij}-\tr(L_iL_j),
 \label{eq:A-s-definition}\\
 D_s
 &=e^{sK/2}(-\nabla^iL_i)e^{-sK/2},
 \label{eq:D-s-definition}\\
 C_s
 &=\frac12\Bcal(e^{sK}F_s)e^{-sK/2},
 \notag\\
 E_s
 &=\frac14e^{sK/2}\Mcal(F_s)e^{sK/2},
 \notag
\end{align}
where $L_i=\frac12e^{-sK}\nabla_i(e^{sK})$. It also gives
\[
  \tr D_s =-\nabla^i\tr L_i =-\Delta\log\sqrt{\det H_s} =0.
\]

At $s=0$,
\[
  A_0=(m-1)\Id, \qquad D_0=0.
\]
Since $m-1>0$ and the coefficients depend smoothly on $(s,p)$, some $s_0>0$ satisfies
\begin{equation}\label{eq:spherical-uniform-gap}
 \begin{aligned}
  \inf_{\substack{p\in\SSph^m\\ |s|\leq s_0}}
  \dist\bigl(\spec A_s(p),\spec D_s(p)\bigr)&>0,\\
  \max\spec D_s(p)&<\min\spec A_s(p)
  \quad(p\in\SSph^m,\ |s|\leq s_0).
 \end{aligned}
\end{equation}
For this choice of $s_0$, the Sylvester equation
\begin{equation}\label{eq:X-s-definition}
 A_sX_s-X_sD_s=C_s
\end{equation}
has a unique smooth solution $X_s$. Set
\begin{equation}\label{eq:q-s-definition}
 q_s=\tr E_s-\tr(C_s^*X_s).
\end{equation}

By Lemma~\ref{lem:cluster}, $q_s$ is the coefficient of $t^2$ in the trace of the two lowest Ricci eigenvalues of the preliminary metric. The gap \eqref{eq:spherical-uniform-gap} makes this identification uniform for small $|s|$ and $|t|$.

For reference, the main quantities in the construction are:
\begin{center}
\begin{tabular}{@{}ll@{}}
$H_s,F_s$ & fiber metric and connection curvature profile,\\
$A_s,D_s$ & horizontal and vertical blocks at $t=0$,\\
$C_s,E_s$ & linear mixed and quadratic vertical coefficients,\\
$X_s$ & solution of the Sylvester equation,\\
$q_s,c_s$ & quadratic cluster coefficient and its spherical mean.
\end{tabular}
\end{center}
We now expand the cluster coefficient. Appendix~\ref{app:algebra}
contains the contractions needed to complete the proof of the next
proposition. For tensor fields and bundle maps, $O_{C^\infty}(s^j)$ means that, for every $\ell\geq0$, all covariant derivatives through order $\ell$ are bounded by $C_\ell|s|^j$. We use the round connection on $\SSph^m$ and the fixed Euclidean fiber metric. For an integrable function $f$, write
\[
  \fintavg_{\SSph^m}f\,d\mu :=\frac{1}{\vol(\SSph^m)} \int_{\SSph^m}f\,d\mu.
\]

\begin{proposition}
\label{prop:q-expansion}
As $s\to0$,
\begin{equation}\label{eq:q-expansion}
 q_s=q_0+sq_1+s^2q_2+O_{C^\infty}(s^3),
\end{equation}
where $u=x^2+y^2$, and $v=z^2$. The coefficients are
\begin{align}
 q_0
 &=1-\frac{m+1}{4}(u+2v),
 \label{eq:q-zero}\\
 q_1
 &=-\frac{3(m+2)}8x(x^2-3y^2),
 \notag\\
 q_2
 &=\frac14\bigl((m+3)u(u+2v)-5u-v\bigr).
 \label{eq:q-two}
\end{align}
Consequently,
\begin{equation}\label{eq:q-average}
 c_s:=\fintavg_{\SSph^m}q_s\,d\mu
 =\frac{s^2}{4(m+1)}+O(s^4).
\end{equation}
In particular, $c_s>0$ for all sufficiently small nonzero $s$.
\end{proposition}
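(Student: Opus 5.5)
We expand every block in \eqref{eq:preliminary-ricci-block-form} in powers of $s$, solve the Sylvester equation \eqref{eq:X-s-definition} order by order, and then assemble $q_s=\tr E_s-\tr(C_s^*X_s)$. Smoothness of all data in $(s,p)$ on the compact sphere makes the remainders $O_{C^\infty}(s^3)$ automatic once the coefficients are identified, so the work is algebraic.

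\textbf{Step 1 (blocks).} Write $H_s=e^{sK}=\Id+sK+\tfrac{s^2}{2}u\Id+O(s^3)$, using $K^2=u\Id$. Then $L_i=\tfrac s2\nabla_iK+O(s^2)$, so $\tr(L_iL_j)=\tfrac{s^2}{4}\tr(\nabla_iK\nabla_jK)+O(s^3)$. Hence $A_s=(m-1)\Id-\tfrac{s^2}{2}(dx\,dx+dy\,dy)+O(s^3)$ and $D_s=-\tfrac s2\Delta K+O(s^2)=\tfrac{ms}{2}K+O(s^2)$. For $D_s$ the order-$s^2$ term must also be kept: it contains $\nabla^iL_i$ at second order together with the commutator with $e^{sK/2}$. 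Similarly, $F_s=F-\tfrac32sKF$ gives $e^{sK}F_s=F-\tfrac12 sKF+O(s^2)$, and $C_s$ and $E_s$ follow from \eqref{eq:C-block} and \eqref{eq:D-block}.

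\textbf{Step 2 (Sylvester equation).} Since $A_0=(m-1)\Id$ and $D_0=0$, we have $X_0=C_0/(m-1)$. At higher orders, $X_1=(C_1-A_1X_0+X_0D_1)/(m-1)$, with the analogous formula at order $s^2$. Thus $\tr(C^*X)$ is $\tfrac{1}{m-1}|C|^2$ plus corrections involving $D_1=\tfrac m2K$ and $A_2$.

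\textbf{Step 3 (computing $q_0,q_1,q_2$).} At $s=0$, $\Bcal(F)$ is computed from $\delta(dy\wedge dz)=(m-1)$-multiples of $y\,dz-z\,dy$ on the sphere, since coordinate functions are eigenfunctions with $\Delta x=-mx$ and $\nabla^2x=-x\,g$. Likewise $\Mcal(F)$ comes from $|dy\wedge dz|^2=1-y^2-z^2$. This should give \eqref{eq:q-zero}; the $(m-1)$ factors must cancel against $1/(m-1)$. The first- and second-order terms are grinding contractions of polynomial forms in $x,y,z$, which I would delegate to the contraction identities of Appendix~\ref{app:algebra}, reducing each to polynomials in $x,y,z$ restricted to $\SSph^m$.

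\textbf{Step 4 (averaging).} Use $\fintavg x_\alpha^2=\tfrac1{m+1}$ and $\fintavg x_\alpha^4=\tfrac{3}{(m+1)(m+3)}$, $\fintavg x_\alpha^2x_\beta^2=\tfrac{1}{(m+1)(m+3)}$. Then $\fintavg(u+2v)=\tfrac4{m+1}$, so $\fintavg q_0=0$. Next, $q_1$ is odd under $x\mapsto -x$, so its average vanishes. Finally,
\[
\fintavg u(u+2v)=\fintavg\bigl(x^4+y^4+2x^2y^2+2x^2z^2+2y^2z^2\bigr)=\tfrac{12}{(m+1)(m+3)}
\]
gives $\fintavg q_2=\tfrac14\bigl(\tfrac{12}{m+1}-\tfrac{11}{m+1}\bigr)=\tfrac{1}{4(m+1)}$. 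The $O(s^3)$ term in the average vanishes because $q_s$ transforms under $s\mapsto -s$ combined with $(x,y)\mapsto(-x,-y)$. This map sends $K\mapsto -K$ and $F\mapsto F$ up to an isometry of the fibre and base, so $c_s$ is even in $s$ and the error is $O(s^4)$.

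\textbf{Main obstacle.} The main obstacle is Step 3 at order $s^2$. It needs the second-order terms of $D_s$, $C_s$ and $E_s$, including the commutator terms from conjugation by $e^{\pm sK/2}$, and the Sylvester correction $X_0D_1$, whose contribution $\tfrac{1}{(m-1)^2}\tr(C_0^*C_0D_1)$-type terms must combine with the rest to produce the factor $(m+3)$ cleanly. I would first verify the average $\tfrac1{4(m+1)}$ directly, as an integrated identity using integration by parts for the $\Bcal$ terms, before computing the pointwise $q_2$.
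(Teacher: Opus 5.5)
Your proposal follows the paper's proof essentially step for step: expand $A_s,D_s,C_s,E_s$ in $s$, solve the Sylvester equation order by order from $A_0=(m-1)\Id$, $D_0=0$, defer the order-$s$ and order-$s^2$ contractions to the appendix table, average with the same spherical moments, and use a parity symmetry to remove the $s^3$ term in $c_s$. Two small points differ from the paper, and neither is an error.

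\begin{itemize}
\item The paper shows that the order-$s^2$ term of $D_s$ that you flag actually vanishes. This is because $\Delta K=-mK$ gives $\nabla^i[\nabla_iK,K]=0$, and $D_1=\frac m2K$ commutes with $K$. That is why the recursion for $X_2$ used in the appendix contains only $A_2X_0$ and $X_1D_1$.
\item The paper uses the antipodal map, under which $F\mapsto F$ exactly. Your rotation $(x,y)\mapsto(-x,-y)$ instead sends $F\mapsto -F$ and $\alpha_s\mapsto-\alpha_{-s}$. So you genuinely need the fibre inversion $\theta\mapsto-\theta$, or equivalently the evenness in $t$ of the $t^2$ coefficient. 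With that composition your symmetry argument works.
\end{itemize}
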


\begin{proof}
Set $\lambda=m-1$. We compute the coefficients in four steps.

\smallskip
\noindent
\emph{Step 1. Matrix and diagonal-block expansions.}
The identity $K^2=u\Id$ gives
\begin{align}
 e^{sK}
 &=\Id+sK+\frac{s^2}{2}u\Id+O_{C^\infty}(s^3),
 \notag\\
 e^{-sK/2}
 &=\Id-\frac s2K+\frac{s^2}{8}u\Id
   +O_{C^\infty}(s^3),
 \label{eq:H-minus-half-expansion}\\
 e^{sK}F_s
 &=F-\frac s2KF-s^2uF+O_{C^\infty}(s^3).
 \label{eq:HF-expansion}
\end{align}
Direct multiplication gives the logarithmic derivative
\[
  e^{-sK}\nabla_i(e^{sK}) =s\nabla_iK+\frac{s^2}{2}[\nabla_iK,K] +O_{C^\infty}(s^3).
\]
Hence
\[
  L_i =\frac s2\nabla_iK+\frac{s^2}{4}[\nabla_iK,K] +O_{C^\infty}(s^3).
\]
Substituting into \eqref{eq:A-s-definition} gives
\begin{align}
 A_s
 &=\lambda\Id+s^2A_2+O_{C^\infty}(s^3),
 \notag\\
 (A_2)_{ij}
 &=-\frac14\tr\bigl((\nabla_iK)(\nabla_jK)\bigr)
 \notag\\
 &=-\frac12\bigl((dx)_i(dx)_j+(dy)_i(dy)_j\bigr).
 \label{eq:A-s-expansion}
\end{align}

Similarly,
\begin{equation}\label{eq:D-expansion}
 D_s=sD_1+O_{C^\infty}(s^3),
 \qquad
 D_1=\frac m2K.
\end{equation}
There is no order-$s^2$ term. Each entry of $K$ is a first spherical harmonic, so $\Delta K=-mK$, and
\[
  \nabla^i[\nabla_iK,K] =[\Delta K,K]+[\nabla^iK,\nabla_iK]=0.
\]
In an orthonormal frame, the last contracted commutator equals $\sum_i[\nabla_iK,\nabla_iK]=0$. Conjugation in \eqref{eq:D-s-definition} contributes only the possible term $\frac12[K,D_1]$ at order $s^2$; it also vanishes because $D_1=(m/2)K$.

\smallskip
\noindent
\emph{Step 2. The Sylvester recursion.}
Write
\begin{align}
 C_s
 &=C_0+sC_1+s^2C_2+O_{C^\infty}(s^3),
 \label{eq:C-expansion}\\
 \tr E_s
 &=e_0+se_1+s^2e_2+O_{C^\infty}(s^3),
 \notag\\
 X_s
 &=X_0+sX_1+s^2X_2+O_{C^\infty}(s^3).
 \label{eq:X-expansion}
\end{align}
The uniform gap \eqref{eq:spherical-uniform-gap} makes the inverse Sylvester operator smooth in $(s,p)$, which gives the $O_{C^\infty}(s^3)$ remainder in \eqref{eq:X-expansion}.

Comparing powers of $s$ in \eqref{eq:X-s-definition}, using \eqref{eq:A-s-expansion} and \eqref{eq:D-expansion}, we obtain
\begin{align}
 X_0
 &=\lambda^{-1}C_0,
 \label{eq:X-zero-recursion}\\
 X_1
 &=\lambda^{-1}(C_1+X_0D_1),
 \notag\\
 X_2
 &=\lambda^{-1}(C_2-A_2X_0+X_1D_1).
 \label{eq:X-two-recursion}
\end{align}

\smallskip
\noindent
\emph{Step 3. The finite spherical contraction.}
The identities
\begin{equation}\label{eq:sphere-identities}
 \begin{gathered}
 \nabla^2x_\alpha=-x_\alpha g_{\SSph^m},
 \qquad \Delta x_\alpha=-mx_\alpha,\\
 \langle dx_\alpha,dx_\beta\rangle
 =\delta_{\alpha\beta}-x_\alpha x_\beta.
 \end{gathered}
\end{equation}
reduce the coefficient matrices in \eqref{eq:C-expansion}--\eqref{eq:X-two-recursion}, and all scalar Hilbert--Schmidt contractions, to polynomials in $x,y,z$. After lowering the base index, \eqref{eq:sphere-identities} also gives the sign check
\begin{align}
 \Bcal(f\,dx_\alpha\wedge dx_\beta)
 &={\langle df,dx_\beta\rangle}\,dx_\alpha
   -{\langle df,dx_\alpha\rangle}\,dx_\beta
 \notag\\
 &\quad +(m-1)f
   (x_\alpha\,dx_\beta-x_\beta\,dx_\alpha).
 \label{eq:B-sphere-product-rule}
\end{align}
Appendix~\ref{app:algebra} computes the expansions \eqref{eq:e-coefficients} and \eqref{eq:mixed-trace-expansion}. Substituting them into \eqref{eq:q-s-definition} gives \eqref{eq:q-zero}--\eqref{eq:q-two}.

\smallskip
\noindent
\emph{Step 4. Spherical averaging.}
The normalized moments are
\begin{align}
 \fintavg_{\SSph^m}u\,d\mu&=\frac{2}{m+1},
 &\fintavg_{\SSph^m}v\,d\mu&=\frac{1}{m+1},
 \notag\\
 \fintavg_{\SSph^m}u^2\,d\mu&=\frac{8}{(m+1)(m+3)},
 &\fintavg_{\SSph^m}uv\,d\mu&=\frac{2}{(m+1)(m+3)}.
 \label{eq:spherical-moments}
\end{align}
The first line of \eqref{eq:spherical-moments} gives $\fintavg_{\SSph^m}q_0\,d\mu=0$. The function $q_1$ also has zero mean because it is odd under $x\mapsto-x$. Using the second line, we obtain
\begin{align}
 \fintavg_{\SSph^m}u(u+2v)\,d\mu
 &=\frac{12}{(m+1)(m+3)},
 \notag\\
 \fintavg_{\SSph^m}q_2\,d\mu
 &=\frac14\left(
   \frac{12}{m+1}
   -\frac{10}{m+1}-\frac{1}{m+1}\right)
 =\frac{1}{4(m+1)}.
 \label{eq:q2-average}
\end{align}
Averaging \eqref{eq:q-expansion} first gives
\[
  c_s=\frac{s^2}{4(m+1)}+O(s^3).
\]
To improve the remainder, let $\mathfrak a$ be the antipodal map of
$\SSph^m$. From \eqref{eq:spherical-KF} and~\eqref{eq:alpha-s},
\[
  \mathfrak a^*K=-K,\qquad
  \mathfrak a^*F=F,\qquad
  \mathfrak a^*\alpha_s=\alpha_{-s}.
\]
Hence $\mathfrak a^*H_s=H_{-s}$ and
$\mathfrak a^*F_s=F_{-s}$. The map
$\mathfrak a\times\Id_{\TT^2}$ pulls
$\widehat g^{\,0}_{s,t}$ back to $\widehat g^{\,0}_{-s,t}$.
Naturality of Ricci and comparison of the quadratic cluster
coefficients give
\[
  q_s(\mathfrak a(p))=q_{-s}(p).
\]
Since the antipodal map preserves the spherical measure, $c_s=c_{-s}$.
The smooth function $c_s$ is therefore even, which improves the
remainder to $O(s^4)$ and proves~\eqref{eq:q-average}.
\end{proof}

\begin{remark}\label{rem:connection-coefficient}
Keeping $H_s=e^{sK}$ and replacing $F_s$ by $F+asKF$, let $q_2(a)$
denote the coefficient of $s^2$ in the corresponding cluster
coefficient. The calculation at the end of Appendix~\ref{app:algebra}
gives
\begin{equation}\label{eq:optimal-connection-coefficient}
  \fintavg_{\SSph^m}q_2(a)\,d\mu
  =\frac{1}{4(m+1)}
  -\frac{m(m+2)}{(m+1)(m-1)(m+3)}
   \left(a+\frac32\right)^2.
\end{equation}
For $m\geq2$, this is uniquely maximized at $a=-3/2$,
which explains the choice of $F_s$.
\end{remark}

\section{Pointwise correction and the construction theorem}\label{sec:correction}

Let $c_s$ be the spherical average in \eqref{eq:q-average}. Since $\fintavg_{\SSph^m}(q_s-c_s)\,d\mu=0$, and $\SSph^m$ is closed and connected, there is a unique mean-zero solution $\phi_s$ of
\begin{equation}\label{eq:exact-poisson}
 \Delta\phi_s=q_s-c_s,
 \qquad
 \fintavg_{\SSph^m}\phi_s\,d\mu=0.
\end{equation}
If a subscript $0$ denotes the mean-zero subspace, then for every $\ell\geq0$ and $0<\alpha<1$,
\[
  \Delta^{-1}:C^{\ell,\alpha}_0(\SSph^m) \longrightarrow C^{\ell+2,\alpha}_0(\SSph^m)
\]
is continuous, with the analogous statement in every Sobolev scale. Since $q_s-c_s$ depends smoothly on $s$ in $C^\infty(\SSph^m)$, the solution $\phi_s$ also depends smoothly on $s$ in $C^\infty(\SSph^m)$.

The first terms illustrate the correction explicitly. With
$w=x(x^2-3y^2)$, the coefficients in~\eqref{eq:q-expansion} satisfy
\[
  \Delta q_0=-2(m+1)q_0,\qquad \Delta w=-3(m+2)w.
\]
Indeed, $q_0$ and $w$ are spherical harmonics of degrees two and
three, respectively. Applying the mean-zero inverse of $\Delta$ gives
\[
  \phi_0=-\frac{q_0}{2(m+1)}=\frac{u+2v}{8}-\frac{1}{2(m+1)},
  \qquad
  \phi_s=\phi_0+\frac{s}{8}w+O_{C^\infty}(s^2).
\]
We use the exact solution~\eqref{eq:exact-poisson} in the construction.

\begin{lemma}
\label{lem:poisson-shift}
Let $(B,h)$ be closed, let $H:B\to\Sym^+(2)$ satisfy $\det H=1$, and let $\phi\in C^\infty(B)$. Suppose that the horizontal and vertical Ricci blocks at $t=0$ are spectrally separated. Keeping $h$, $\alpha$, and $\phi$ fixed, replace $H$ in \eqref{eq:kk-metric} by $\widetilde H_t=e^{t^2\phi}H$. If $q$ and $\widetilde q$ denote, respectively, the second-order coefficients in the traces of the Ricci clusters issuing from the vertical block before and after this replacement, then
\begin{equation}\label{eq:poisson-shift-conclusion}
 \widetilde q=q-\Delta_h\phi.
\end{equation}
\end{lemma}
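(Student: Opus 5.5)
The plan is to recompute the four blocks of Lemma~\ref{lem:kk-blocks} for the modified fiber metric $\widetilde H_t=e^{t^2\phi}H$, expanded to second order in $t$, and then read off the new coefficient from Lemma~\ref{lem:cluster}. By the remark after Lemma~\ref{lem:cluster}, the family with $\widetilde H_t$ is even in $t$, so the parity hypothesis \eqref{eq:cluster-parity} holds. The $t=0$ blocks $A$ and $D$ are unchanged, since $\widetilde H_0=H$. Hence the spectral separation hypothesis persists, and the Sylvester solution $X$ of \eqref{eq:sylvester} is the same before and after the replacement. By \eqref{eq:cluster-trace-formula}, it therefore suffices to show two things: the order-$t$ mixed coefficient $C$ is unchanged, and the trace of the order-$t^2$ vertical coefficient changes by $-\Delta_h\phi$. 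The horizontal $t^2$ coefficient $B$ does not enter the cluster trace, so its change is irrelevant.

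\textbf{Step 1: basic quantities.} Since $\det H=1$ and the fiber has rank two, $\det\widetilde H_t=e^{2t^2\phi}$. Hence $\widetilde\nu=e^{t^2\phi}$ and $\log\widetilde\nu=t^2\phi$. Also
\[
\widetilde L_i=L_i+\tfrac{t^2}{2}(\nabla_i\phi)\Id,
\qquad
\tr L_i=\nabla_i\log\nu=0 \text{ by \eqref{eq:trace-L}}.
\]

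\textbf{Step 2: mixed block.} In \eqref{eq:C-block}, we have $\widetilde\nu\widetilde H_tF=e^{2t^2\phi}HF$ and $\widetilde H_t^{-1/2}=e^{-t^2\phi/2}H^{-1/2}$. The prefactor $t/(2\widetilde\nu)$ differs from $t/(2\nu)$ only at order $t^3$. So $\widetilde C_t=C_t+O(t^3)$, and the linear coefficient $C$ is unchanged. This is compatible with the $O(t^3)$ off-diagonal remainders allowed in \eqref{eq:abstract-block-family}.

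\textbf{Step 3: vertical block.} In \eqref{eq:D-block}, conjugation by $\widetilde H_t^{1/2}$ differs from conjugation by $H^{1/2}$ only by scalar factors, which cancel. Expanding,
\[
-\widetilde\nu^{-1}\nabla^i(\widetilde\nu\widetilde L_i)
=-\nabla^iL_i-\tfrac{t^2}{2}(\Delta_h\phi)\Id-t^2(\nabla^i\phi)L_i+O(t^4).
\]
The curvature term $\frac{t^2}{4}\widetilde H^{1/2}\Mcal(F)\widetilde H^{1/2}$ differs from the old one by $O(t^4)$. Therefore
\[
\widetilde E=E+H^{1/2}\Bigl(-\tfrac12(\Delta_h\phi)\Id-(\nabla^i\phi)L_i\Bigr)H^{-1/2}.
\]
Taking traces and using $\tr L_i=0$ and $\tr\Id=2$ gives $\tr\widetilde E=\tr E-\Delta_h\phi$. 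Consistently, \eqref{eq:vertical-trace} gives $-\widetilde\nu^{-1}\Delta_h\widetilde\nu=-t^2\Delta_h\phi+O(t^4)$ directly.

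\textbf{Step 4: conclusion.} By \eqref{eq:cluster-trace-formula},
\[
\widetilde q=\tr\widetilde E-\tr(C^*X)=q-\Delta_h\phi,
\]
which is \eqref{eq:poisson-shift-conclusion}.

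The main obstacle is bookkeeping rather than any single estimate. The vertical orthonormal identification $\mathcal I_{\widetilde H_t}$ now depends on $t$, so one must check that the blocks are still written in the form \eqref{eq:abstract-block-family} with $t$-independent leading terms. This holds because, to the order needed, the identifications differ only by the scalar factor $e^{-t^2\phi/2}$, which commutes with everything. One must also check that the cluster trace is independent of the choice of orthonormal frame, which it is, being a sum of eigenvalues. Finally, one must confirm that every neglected term is genuinely $O(t^3)$ off the diagonal and $O(t^4)$ on it, which is the evenness already verified in the remark after Lemma~\ref{lem:cluster}.
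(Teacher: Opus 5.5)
Your proposal is correct and follows the paper's proof. You observe that the $t=0$ blocks and the first-order mixed coefficient $C$ are unchanged, so the Sylvester solution $X$ is the same; that the horizontal $t^2$ coefficient does not enter the cluster trace; and that $\widetilde\nu_t=e^{t^2\phi}$ shifts the vertical $t^2$ trace by $-\Delta_h\phi$. The only difference is cosmetic: you compute the full block $\widetilde E$ before tracing (using $\tr L_i=0$), whereas the paper reads the shift directly off \eqref{eq:vertical-trace}, which you also note as a check.
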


\begin{proof}
The new fiber-volume density is $\widetilde\nu_t=e^{t^2\phi}$, so
\[
  -\widetilde\nu_t^{-1}\Delta_h\widetilde\nu_t=-t^2\Delta_h\phi-t^4|d\phi|_h^2.
\]
In \eqref{eq:vertical-trace}, the $F$-quadratic term already has a factor $t^2$, and multiplication of $H$ by $e^{t^2\phi}$ changes it only by $O(t^4)$. Thus the coefficient of $t^2$ in the vertical trace changes by $-\Delta_h\phi$.

The blocks at $t=0$ are unchanged. Moreover,
\[
  \begin{aligned}
    \widetilde\nu_t\widetilde H_t&=e^{2t^2\phi}H,\\
    \widetilde H_t^{-1/2}&=e^{-t^2\phi/2}H^{-1/2},
  \end{aligned}
\]
so \eqref{eq:C-block} gives the same first-order mixed coefficient $C$. Hence the Sylvester solution $X$ is unchanged. The horizontal block may change at order $t^2$, but this coefficient does not enter \eqref{eq:cluster-trace-formula}. The cluster coefficient therefore changes by $-\Delta_h\phi$, proving \eqref{eq:poisson-shift-conclusion}.
\end{proof}

We now define the final family. For small $|s|$ and $|t|$, set on $\SSph^m\times\TT^2$
\begin{equation}\label{eq:final-two-parameter-metric}
 \widehat g_{s,t}
 =g_{\SSph^m}
 +e^{t^2\phi_s}(e^{sK})_{ab}
 (d\theta^a+t\alpha_s^a)(d\theta^b+t\alpha_s^b),
 \qquad a,b\in\{1,2\}.
\end{equation}
For $k\geq2$, add $k-2$ flat torus directions and set
\begin{align}
 g_{s,t}^{(k)}
 &=\widehat g_{s,t}+\sum_{A=3}^k(d\theta^A)^2,
 \label{eq:final-rank-k-metric}\\
 g_s^{(k)}
 &=g_{s,s^2}^{(k)}.
 \notag
\end{align}
When $k=2$, the sum in \eqref{eq:final-rank-k-metric} is empty. We use the canonical permutation of factors to regard $g_{s,t}^{(k)}$ as a metric on $\TT^k\times\SSph^m$.

\begin{proof}[Proof of Theorem~\ref{thm:construction}]
\smallskip
\noindent
 Let $\Lambda_2(s,t,p)$ be the trace of the Ricci cluster of \eqref{eq:final-two-parameter-metric} issuing from the torus directions. The scalar correction preserves parity and the gap \eqref{eq:spherical-uniform-gap} at $t=0$. Smoothness of $H_s$, $\alpha_s$, and $\phi_s$ on the compact parameter set gives uniform bounds for the required derivatives. Lemmas~\ref{lem:poisson-shift} and~\ref{lem:cluster}, with the uniformity observation following the latter and \eqref{eq:exact-poisson}, therefore give
\[
  \begin{aligned}
    q_s(p)-\Delta\phi_s(p)&=c_s,\\
    \Lambda_2(s,t,p)&=t^2c_s+O(t^4)
  \end{aligned}
\]
uniformly for $p\in\SSph^m$, $|s|\leq s_0$, and sufficiently small $|t|$.

The positive term has size $t^2s^2$, whereas the error has size
$t^4$. Thus $t=o(s)$ makes the error smaller than the leading term.
Choose the smooth scale $t=s^2$ and use~\eqref{eq:q-average}.
Uniformly on the sphere,
\begin{align}
 \Lambda_2(s,s^2,p)
 &=s^4\left(\frac{s^2}{4(m+1)}+O(s^4)\right)+O(s^8)
 \notag\\
 &=\frac{s^6}{4(m+1)}+O(s^8).
 \label{eq:final-cluster-asymptotic}
\end{align}

 At $(s,t)=(0,0)$, the two torus eigenvalues vanish and the remaining $m$ Ricci eigenvalues equal $m-1$. The uniform ordered gap identifies the selected cluster with the two lowest Ricci eigenvalues of $\widehat g_{s,t}$ for small $(s,t)$. Denote them by $\mu_1(s,t,p)$ and $\mu_2(s,t,p)$, counted with multiplicity.

The metric $g_{s,t}^{(k)}$ is the product of $\widehat g_{s,t}$ and a flat $(k-2)$-torus. Hence, as a multiset,
\begin{equation}\label{eq:product-spectrum}
 \spec\bigl(\Ric_{g_{s,t}^{(k)}}^\sharp\bigr)
 =\spec\bigl(\Ric_{\widehat g_{s,t}}^\sharp\bigr)
 \cup\{\underbrace{0,\ldots,0}_{k-2\text{ times}}\}.
\end{equation}
The active eigenvalues converge uniformly to zero, while every other eigenvalue of $\widehat g_{s,t}$ converges uniformly to $m-1>0$. Thus the $k$ lowest eigenvalues in \eqref{eq:product-spectrum} are $\mu_1,\mu_2$ and the $k-2$ zeros, regardless of their internal order. Equation \eqref{eq:final-cluster-asymptotic} proves \eqref{eq:construction-asymptotic} uniformly on the full product by torus invariance. Its positive leading coefficient gives $\RicSum{k}(g_s^{(k)})>0$ for all sufficiently small $s>0$.

The forms $d\theta^A$ are global invariant one-forms on the torus, and $x,y,z$, $\alpha_s$, and $\phi_s$ are global smooth data on the sphere. The matrix $e^{t^2\phi_s}e^{sK}$ is a positive scalar multiple of a positive-definite symmetric matrix. Thus \eqref{eq:final-rank-k-metric} defines a global smooth Riemannian metric. When $t=s^2$,
\[
  \begin{aligned}
    e^{sK}&=\Id+O_{C^\infty}(s),\\
    t\alpha_s&=O_{C^\infty}(s^2),\\
    t^2\phi_s&=O_{C^\infty}(s^4).
  \end{aligned}
\]
Therefore the family extends smoothly to $s=0$ and converges in $C^\infty$ to the standard flat--round product $g_0^{(k)}$.
\end{proof}

\section{A local Bochner obstruction}\label{sec:obstruction}

We prove Theorem~\ref{thm:obstruction-intro}. Let $\eta^1,\eta^2,\eta^3$ be the parallel orthonormal one-forms in its hypotheses. They are closed. For $g$ close to $g_0$, let $\omega_g^a$ be the $g$-harmonic representative of $[\eta^a]$. Since $\omega_g^a$ and $\eta^a$ represent the same cohomology class, there is a unique function $u_g^a$, normalized to have zero $g_0$-mean, such that
\[
  \omega_g^a=\eta^a+du_g^a, \qquad \int_Mu_g^a\,d\mu_{g_0}=0.
\]
If $\delta_g$ denotes the $L^2(g)$-adjoint of $d$, then $u_g^a$ solves
\begin{equation}\label{eq:harmonic-representative-equation}
 \delta_gdu_g^a=-\delta_g\eta^a.
\end{equation}
\begin{lemma}
\label{lem:harmonic-frame}
If $g\to g_0$ in $C^{2,\alpha}$, then
\[
  \begin{aligned}
    u_g^a&\longrightarrow0 \quad\text{in }C^{3,\alpha},\\
    X_a:=(\omega_g^a)^{\sharp_g}&\longrightarrow(\eta^a)^{\sharp_{g_0}}\quad\text{uniformly}.
  \end{aligned}
\]
Consequently, $X_1,X_2,X_3$ are pointwise linearly independent throughout a sufficiently small $C^{2,\alpha}$-neighborhood of $g_0$.
\end{lemma}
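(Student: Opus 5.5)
The plan is to treat \eqref{eq:harmonic-representative-equation} as a uniformly elliptic equation with coefficients close to those of $g_0$. The right-hand side is small because $\eta^a$ is $g_0$-coclosed.

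First, write the operator in local coordinates,
\[
  \delta_g du=-|g|^{-1/2}\partial_i\bigl(|g|^{1/2}g^{ij}\partial_ju\bigr),
\]
and similarly
\[
  \delta_g\eta^a=-|g|^{-1/2}\partial_i\bigl(|g|^{1/2}g^{ij}\eta^a_j\bigr).
\]
The parallel form $\eta^a$ is $g_0$-coclosed, so $\delta_{g_0}\eta^a=0$. Hence
\[
  \delta_g\eta^a=(\delta_g-\delta_{g_0})\eta^a,
\]
which involves only first derivatives of $g$ (with $\eta^a$ smooth). It is therefore $O(\|g-g_0\|_{C^{2,\alpha}})$ in $C^{1,\alpha}$; it even lies in $C^{1,\alpha}$ since $g\in C^{2,\alpha}$. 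Using the divergence form, we may also view the equation as $\partial_i(|g|^{1/2}g^{ij}(\partial_ju+\eta^a_j))=0$ with $C^{2,\alpha}$ coefficients.

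Second, establish uniform invertibility. Since $M$ is closed and connected, $\delta_g d=\Delta_g$ (the positive Hodge Laplacian on functions) is an isomorphism from $g$-mean-zero $C^{3,\alpha}$ functions onto $g$-mean-zero $C^{1,\alpha}$ functions. Its coefficients (top order $g^{ij}$ in $C^{2,\alpha}$, first order in $C^{1,\alpha}$) converge to those of $\Delta_{g_0}$. Schauder estimates give $\|u\|_{C^{3,\alpha}}\le C(\|\Delta_g u\|_{C^{1,\alpha}}+\|u\|_{C^0})$ with $C$ uniform near $g_0$. To remove the $C^0$ term uniformly, argue by contradiction and compactness (Arzel\`a--Ascoli into $C^{3}$): a limit would be $g_0$-harmonic with zero $g_0$-mean, hence zero. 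Alternatively, view $\Delta_g$ as a small perturbation of the invertible $\Delta_{g_0}$ on the $g_0$-mean-zero space, after projecting the constants. The right side $-\delta_g\eta^a$ automatically has zero $g$-integral, and we normalize $u$ by $g_0$-mean, which is harmless because adding constants does not change $du$. This gives $\|u_g^a\|_{C^{3,\alpha}}\le C\|g-g_0\|_{C^{2,\alpha}}\to0$.

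Third, obtain the conclusions. We have $\omega_g^a-\eta^a=du_g^a\to0$ in $C^{2,\alpha}$, and $\sharp_g$ depends continuously on $g$ in $C^0$. Hence $X_a=g^{-1}\omega_g^a\to g_0^{-1}\eta^a$ uniformly. The limits are pointwise $g_0$-orthonormal, so the Gram determinant $\det(g_0(X_a,X_b))$ converges uniformly to $1$ and is positive on a small neighborhood. This gives pointwise linear independence.

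The main obstacle is the uniformity of the Schauder constant and of invertibility on the mean-zero space as $g$ varies. I would handle it by the contradiction/compactness argument above rather than by tracking constants.
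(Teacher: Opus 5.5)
Your proposal is correct. Its overall structure matches the paper's: the right-hand side is small because $\delta_{g_0}\eta^a=0$, a uniform $C^{3,\alpha}$ estimate holds near $g_0$, and the conclusion follows from continuity of $\sharp_g$ and a Gram-determinant argument. The real difference is in how you obtain uniform invertibility.

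The paper works on the fixed $g_0$-mean-zero complements $\mathcal E\subset C^{3,\alpha}$ and $\mathcal F\subset C^{1,\alpha}$. It sets $\widehat L_g=\Pi_0\circ\delta_g d$, where $\Pi_0$ subtracts the $g_0$-mean, observes that $\widehat L_g\to\widehat L_{g_0}$ in operator norm, and inverts by a Neumann series. This gives existence and the uniform bound at once, with no need for solvability at each individual $g$. The only bookkeeping is that the projected equation implies the true one, since both $\delta_g du_g^a$ and $-\delta_g\eta^a$ have zero $g$-integral. Your stated alternative is exactly this argument.

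Your primary route does three things instead:
\begin{itemize}
\item it solves for each $g$ separately on $g$-mean-zero functions;
\item it shifts by a constant to reach $g_0$-mean zero;
\item it removes the $C^0$ term from the Schauder estimate by contradiction.
\end{itemize}
In the contradiction step, suppose $\|u_n\|_{C^{3,\alpha}}=1$ and $\|\Delta_{g_n}u_n\|_{C^{1,\alpha}}\to0$. The Schauder inequality keeps $\|u_n\|_{C^0}$ bounded below, so the $C^3$ limit is a nonzero, $g_0$-mean-zero, $g_0$-harmonic function, which is impossible on a closed connected manifold. That argument is sound.

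The trade-off is this. The compactness argument is non-constructive and relies on per-$g$ Fredholm theory. The perturbative argument is quantitative: its constant comes from $\|\widehat L_{g_0}^{-1}\|$ and the size of the neighborhood. It also needs only the single isomorphism at $g_0$.

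Both routes give the same estimate $\|u_g^a\|_{C^{3,\alpha}}\le C\|\delta_g\eta^a\|_{C^{1,\alpha}}$. Your Lipschitz bound $\|\delta_g\eta^a\|_{C^{1,\alpha}}=O(\|g-g_0\|_{C^{2,\alpha}})$ slightly sharpens the paper's statement that $\delta_g\eta^a\to0$.
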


\begin{proof}
Use the fixed complements
\[
  \begin{aligned}
    \mathcal E&=\{u\in C^{3,\alpha}(M):\int_Mu\,d\mu_{g_0}=0\},\\
    \mathcal F&=\{f\in C^{1,\alpha}(M):\int_Mf\,d\mu_{g_0}=0\}.
  \end{aligned}
\]
Let $\Pi_0:C^{1,\alpha}(M)\to\mathcal F$ subtract the $g_0$-mean. Define
\[
  \widehat L_g :=\Pi_0\circ\delta_gd: \mathcal E\longrightarrow\mathcal F.
\]
Because $M$ is closed and connected, $\delta_{g_0}d$ has the constants as its kernel and the $g_0$-mean-zero functions as its range. Global Schauder theory therefore shows that $\widehat L_{g_0}:\mathcal E\to\mathcal F$ is an isomorphism. The coefficients of $\delta_gd$ depend continuously on $g$ in the norms for maps $C^{3,\alpha}\to C^{1,\alpha}$, so
\[
  \widehat L_g\longrightarrow\widehat L_{g_0} \quad\text{in } \mathcal L(\mathcal E,\mathcal F).
\]
After shrinking the neighborhood, the Neumann perturbation argument shows that every $\widehat L_g$ is invertible and that
\begin{equation}\label{eq:uniform-projected-inverse}
 \sup_g\|\widehat L_g^{-1}\|_{\mathcal L(\mathcal F,\mathcal E)}
 <\infty.
\end{equation}

Put $f_g^a=-\delta_g\eta^a$ and define $u_g^a=\widehat L_g^{-1}\Pi_0f_g^a$. Then $\Pi_0(\delta_gdu_g^a-f_g^a)=0$, so $\delta_gdu_g^a-f_g^a$ is constant. Both terms have zero $g$-mean:
\[
  \int_M\delta_gdu_g^a\,d\mu_g=0, \qquad \int_Mf_g^a\,d\mu_g=0.
\]
The constant is therefore zero, so $u_g^a$ solves \eqref{eq:harmonic-representative-equation}. By uniqueness, this is the normalized potential defined above. Equation \eqref{eq:uniform-projected-inverse} gives the uniform estimate
\begin{equation}\label{eq:uniform-schauder}
 \|u_g^a\|_{C^{3,\alpha}(g_0)}
 \leq C\|\delta_g\eta^a\|_{C^{1,\alpha}(g_0)}.
\end{equation}

Since $\eta^a$ is parallel, $\delta_{g_0}\eta^a=0$, while $g\to g_0$ in $C^{2,\alpha}$ gives
\[
  \delta_g\eta^a\longrightarrow0 \quad\text{in }C^{1,\alpha}.
\]
Estimate \eqref{eq:uniform-schauder} now gives $u_g^a\to0$ in $C^{3,\alpha}$. Raising an index is continuous in $g$, so $X_a\to(\eta^a)^{\sharp_{g_0}}$ uniformly. The limiting fields form a pointwise orthonormal $3$-frame, and their independence therefore persists uniformly.
\end{proof}

\begin{lemma}
\label{lem:three-dimensional-trace}
Let $Q$ be self-adjoint on a three-dimensional Euclidean space, with eigenvalues $\rho_1\leq\rho_2\leq\rho_3$, and put $|Q|=(Q^2)^{1/2}$. If $\rho_1+\rho_2>0$, then
\begin{equation}\label{eq:trace-norm-control}
 \tau:=\tr Q>0,
 \qquad
 \tr|Q|<3\tau.
\end{equation}
\end{lemma}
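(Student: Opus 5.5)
We work directly with the ordered eigenvalues $\rho_1\leq\rho_2\leq\rho_3$ and reduce both claims in \eqref{eq:trace-norm-control} to elementary inequalities. Write $\sigma:=\rho_1+\rho_2>0$.

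\emph{Positivity of the trace.} Since $\rho_2\geq\rho_1$ and $\rho_1+\rho_2>0$, we get $\rho_2>0$, and then $\rho_3\geq\rho_2>0$. Hence
\[
  \tau=\rho_1+\rho_2+\rho_3=\sigma+\rho_3>0.
\]
We also record that $\rho_2$ and $\rho_3$ are positive, so the only eigenvalue that can be negative is $\rho_1$.

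\emph{Trace-norm bound.} Since $|Q|$ has eigenvalues $|\rho_\ell|$, we have $\tr|Q|=|\rho_1|+\rho_2+\rho_3$. We split into two cases.
\begin{itemize}
\item If $\rho_1\geq0$, then $\tr|Q|=\tau<3\tau$, because $\tau>0$.
\item If $\rho_1<0$, then $\tr|Q|=\tau-2\rho_1=\tau+2|\rho_1|$. The constraint $\rho_1+\rho_2>0$ gives $|\rho_1|<\rho_2\leq\rho_3$, so
\[
  2|\rho_1|<\rho_2+\rho_3=\tau-\rho_1=\tau+|\rho_1|.
\]
This yields $|\rho_1|<\tau$, and therefore $\tr|Q|<3\tau$.
\end{itemize}

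A cleaner unified route is $\tr|Q|-\tau=2\max(0,-\rho_1)$, combined with $-\rho_1<\rho_2\leq\tfrac12(\rho_2+\rho_3)<\tfrac12\tau+\tfrac12|\rho_1|$; this is the same argument without the case split.

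\emph{Main obstacle.} There is no genuine obstacle here. The only point needing care is the strictness: $\tr|Q|<3\tau$ must hold with a strict inequality even in the degenerate case $\rho_1=0$. That case is covered because $\tau>0$ strictly, which is why the positivity claim is established first.
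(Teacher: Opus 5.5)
Your proof is correct and is essentially the paper's argument: the same case split on the sign of $\rho_1$, with $|\rho_1|<\rho_2\leq\rho_3$ giving $|\rho_1|<\tau$ and hence $\tr|Q|=\tau+2|\rho_1|<3\tau$. One small slip in your optional unified remark: the last step $\tfrac12(\rho_2+\rho_3)<\tfrac12\tau+\tfrac12|\rho_1|$ is actually an equality when $\rho_1<0$ (so it should read $\leq$), which is harmless because the earlier inequality $-\rho_1<\rho_2$ is already strict.
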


\begin{proof}
If $\rho_1\geq0$, then the hypothesis gives $\tau>0$, and hence $\tr|Q|=\tau<3\tau$. Otherwise put $a=|\rho_1|$. Then $a<\rho_2\leq\rho_3$, so $\tau=-a+\rho_2+\rho_3>a>0$. Therefore
\[
  \tr|Q|=a+\rho_2+\rho_3=\tau+2a<3\tau.\qedhere
\]

\end{proof}

\begin{remark}
The corresponding trace estimate fails uniformly in dimension two.
For $\varepsilon>0$, let $Q_\varepsilon=\diag(-1,1+\varepsilon)$.
Then
\[
  \tr Q_\varepsilon=\varepsilon>0,
  \qquad
  \frac{\tr|Q_\varepsilon|}{\tr Q_\varepsilon}
  =\frac{2+\varepsilon}{\varepsilon}\longrightarrow\infty
  \quad\text{as }\varepsilon\downarrow0.
\]
Thus the argument below depends on the third harmonic direction.
\end{remark}

\begin{proof}[Proof of Theorem~\ref{thm:obstruction-intro}]
Set $P=\operatorname{span}\{X_1,X_2,X_3\}\subset TM$ and take all norms and traces fiberwise with respect to $g|_P$. Define the frame operator $S:P\to P$ by
\[
  S(v)=\sum_{a=1}^3g(X_a,v)X_a.
\]
After shrinking the neighborhood, Lemma~\ref{lem:harmonic-frame} gives
\begin{equation}\label{eq:S-close}
 \|S-\Id_P\|_{\mathrm{op}}<\frac13
\end{equation}
pointwise, since the Gram matrix $(g(X_a,X_b))_{a,b=1}^3$ converges uniformly to the identity and has the same eigenvalues as $S$.

Let $Q=\pi_P\circ\Ric_g^\sharp|_P$ be the self-adjoint compression to $P$, where $\pi_P$ is the orthogonal projection. Thus $g(Qu,v)=\Ric_g(u,v)$ for $u,v\in P$; no invariance of $P$ is required. If $\rho_1\leq\rho_2\leq\rho_3$ are its eigenvalues, Ky Fan's principle \eqref{eq:ky-fan} gives $\rho_1+\rho_2\geq\RicSum{2}(g,p)$. Assuming $\RicSum{2}(g)>0$ everywhere, Lemma~\ref{lem:three-dimensional-trace} applies to $Q$ pointwise.

In an orthonormal eigenbasis of $Q$, we obtain
\begin{equation}
 \left|\tr\bigl((S-\Id_P)Q\bigr)\right|
 \leq\|S-\Id_P\|_{\mathrm{op}}\tr|Q|.
 \label{eq:trace-duality-compressed}
\end{equation}
This estimate does not require $S$ and $Q$ to commute. Since $S=\sum_aX_a\otimes X_a^{\flat_g}$, we have $\sum_a\Ric_g(X_a,X_a)=\tr(SQ)$. Equations \eqref{eq:S-close}, \eqref{eq:trace-norm-control}, and \eqref{eq:trace-duality-compressed} therefore give
\begin{align}
 \sum_{a=1}^3\Ric_g(X_a,X_a)
 &=\tr(SQ)
 \notag\\
 &\geq\tr Q
   -\|S-\Id_P\|_{\mathrm{op}}\tr|Q|>0.
 \label{eq:positive-compressed-trace}
\end{align}
Since $\omega_g^a$ is harmonic and $M$ is closed, the integrated Bochner formula gives
\begin{equation}\label{eq:bochner-identity}
 \int_M\Ric_g(X_a,X_a)\,d\mu_g
 =-\int_M|\nabla\omega_g^a|^2\,d\mu_g.
\end{equation}
Summing \eqref{eq:bochner-identity} over $a=1,2,3$ shows that the integral of $\sum_a\Ric_g(X_a,X_a)$ is nonpositive. This contradicts \eqref{eq:positive-compressed-trace}, because that function is continuous and strictly positive on $M$.
\end{proof}

\begin{corollary}\label{cor:torus-obstruction}
Let $k\geq3$, let $(Y,g_Y)$ be a closed connected Riemannian manifold, and let $g_{\TT^k}$ be any flat metric on $\TT^k$. For every $0<\alpha<1$, there is a $C^{2,\alpha}$-neighborhood of $g_{\TT^k}+g_Y$ on $\TT^k\times Y$ that contains no smooth metric with $\RicSum{2}>0$ everywhere.
\end{corollary}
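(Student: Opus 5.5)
The corollary should follow by applying Theorem~\ref{thm:obstruction-intro} to $(M,g_0)=(\TT^k\times Y,\,g_{\TT^k}+g_Y)$. The work is to produce three pointwise $g_0$-orthonormal parallel one-forms on this product.

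First, $M$ is closed and connected, since both factors are. Write the flat torus as $\TT^k=\RR^k/\Gamma$ for a lattice $\Gamma$, with $g_{\TT^k}$ induced by a constant inner product $\langle\cdot,\cdot\rangle$ on $\RR^k$. This uses the standard fact that every flat metric on $\TT^k$ arises this way. If one prefers to avoid it, the argument only needs $k$ global parallel one-forms forming a basis of $T^*\TT^k$, which also follows from holonomy triviality.

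Next, choose constant covectors $\xi^1,\xi^2,\xi^3\in(\RR^k)^*$ that are orthonormal with respect to the dual inner product. This is possible because $k\geq3$. They descend to $\Gamma$-invariant, hence global, parallel one-forms on $\TT^k$. Pull them back to $M$ along the projection onto the torus factor, and call the results $\eta^1,\eta^2,\eta^3$.

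In the product metric, the Levi-Civita connection splits, so pullbacks of parallel forms on a factor remain parallel. Also, $g_0(\eta^a,\eta^b)=\langle\xi^a,\xi^b\rangle=\delta^{ab}$ at every point. Hence the $\eta^a$ are pointwise orthonormal and parallel.

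Theorem~\ref{thm:obstruction-intro} then gives, for each $0<\alpha<1$, a $C^{2,\alpha}$-neighborhood $\mathcal U$ of $g_0$ in the space of smooth metrics containing no metric with $\RicSum{2}>0$ everywhere. This is exactly the claim.

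I expect no real obstacle here. The only point needing care is the justification that a flat metric on $\TT^k$ carries $k$ global parallel one-forms, that is, that it has trivial holonomy. For the flat--round case $k\geq3$, $Y=\SSph^m$, this recovers the assertion that small $C^{2,\alpha}$ perturbations of the product on $\TT^3\times\SSph^m$ fail $\RicSum{2}>0$.
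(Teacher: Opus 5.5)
Your proof is correct. The reduction is the same as the paper's: pull back three orthonormal parallel one-forms from the flat torus factor and apply Theorem~\ref{thm:obstruction-intro}. The difference lies only in how you obtain a global parallel coframe on $(\TT^k,g_{\TT^k})$.

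You invoke the classification of flat tori, namely that every flat metric on $\TT^k$ is isometric to $\RR^k/\Gamma$ with a constant inner product. This follows from Killing--Hopf and Bieberbach's first theorem. An abelian crystallographic group consists of translations, because commuting with a full-rank translation lattice forces each rotational part to be the identity.

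The paper proves the needed holonomy triviality directly:
\begin{itemize}
\item Since $\Ric=0$, the Bochner formula makes every harmonic one-form parallel.
\item Because $b_1(\TT^k)=k$ and evaluation at a point is injective on parallel forms, the harmonic forms give a parallel basis of $T^*\TT^k$.
\item Parallel forms have constant inner products, so Gram--Schmidt at one point, with constant coefficients, makes this basis globally orthonormal.
\end{itemize}

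Your route is more explicit, since the forms are literally constant covectors. It does, however, rest on a classification result that you cite rather than prove. The point you flagged as ``needing care'' is exactly what the paper's Bochner argument supplies in a few lines. Your fallback appeal to ``holonomy triviality'' is left unjustified, and Bochner plus $b_1=k$ is the natural justification for it.

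The paper's argument also has broader reach. It works verbatim for any closed factor $(N,g_N)$ with $\Ric\geq0$ and $b_1(N)\geq3$, where no Bieberbach-type structure theorem is available.
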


\begin{proof}
Since $b_1(\TT^k)=k$ and $\Ric_{g_{\TT^k}}=0$, the Bochner formula implies that every harmonic one-form is parallel. Evaluation at one point is injective on parallel one-forms. Since the harmonic space and each cotangent space both have dimension $k$, evaluation is an isomorphism. Orthonormalizing a harmonic basis at one point gives a global parallel orthonormal coframe. The pullbacks of any three members are parallel and orthonormal for the product metric on $\TT^k\times Y$, so Theorem~\ref{thm:obstruction-intro} applies.
\end{proof}

\appendix

\section{Exact contraction table}\label{app:algebra}

This appendix records the finite-dimensional calculation used in Proposition~\ref{prop:q-expansion}. Set
\begin{equation}
 (x_1,x_2,x_3)=(x,y,z),
 \qquad
 \xi=\begin{pmatrix}x\\y\\z\end{pmatrix},
 \qquad
 \Gamma=\Id_3-\xi\xi^T,
 \qquad
 P_0=\diag(1,1,0).
 \label{eq:appendix-data}
\end{equation}
The matrix $\Gamma$ in \eqref{eq:appendix-data} is the Gram matrix of the first three coordinate one-forms. Its entries are $\Gamma_{\mu\nu} =\langle dx_\mu,dx_\nu\rangle$ for $1\leq\mu,\nu\leq3$.

Let $\varepsilon_1,\varepsilon_2$ be the standard basis of $\RR^2$. A matrix $Y\in\RR^{3\times2}$ represents the bundle map
\[
  \begin{gathered}
    \Phi(Y):\RR^2\longrightarrow T\SSph^m,\\
    \Phi(Y)\varepsilon_a =\sum_{\mu=1}^3Y_{\mu a}\nabla x_\mu.
  \end{gathered}
\]
For coefficient matrices $U,V$, define
\begin{equation}\label{eq:Gamma-pairing}
 \langle U,V\rangle_\Gamma
 :=\tr(U^T\Gamma V).
\end{equation}
The pairing \eqref{eq:Gamma-pairing} is the intrinsic Hilbert--Schmidt pairing of the represented maps, even where the coefficient representation is nonunique; this includes every point when $m=2$.

Raise the base index of $\Bcal(G)$ with the round metric before using this $3\times2$ representation. Define
\begin{align}
 M_0&=
 \begin{pmatrix}
 0&z\\
 -z&0\\
 y&-x
 \end{pmatrix},
 &
 M_1&=
 \begin{pmatrix}
 yz&xz\\
 xz&-yz\\
 -2xy&y^2-x^2
 \end{pmatrix},
 \notag\\
 M_2&=uM_0,
 &
 N_0&=
 \begin{pmatrix}
 0&0\\
 0&0\\
 -y&x
 \end{pmatrix},
 \notag\\
 M_3&=
 \begin{pmatrix}
 0&z/4\\
 -z/4&0\\
 y&-x
 \end{pmatrix}.
 \label{eq:appendix-matrices}
\end{align}

Applying \eqref{eq:B-sphere-product-rule} to $F$, $KF$, and $K^2F=uF$ gives the following identities in the coefficient matrices \eqref{eq:appendix-matrices}
\begin{equation}\label{eq:B-identities}
\begin{aligned}
 \Bcal(F)&=(m-1)M_0,\\
 \Bcal(KF)&=-mM_1,\\
 \Bcal(K^2F)&=(m+1)M_2+2N_0.
 \end{aligned}
\end{equation}

For example, before raising the base index, the product
rule~\eqref{eq:B-sphere-product-rule} gives
\[
  \begin{aligned}
    \Bcal(x\,dy\wedge dz)&=mx(y\,dz-z\,dy),\\
    \Bcal(-y\,dz\wedge dx)&=my(x\,dz-z\,dx).
  \end{aligned}
\]
Adding these identities yields
\[
  \Bcal((KF)^1)=-m(yz\,dx+xz\,dy-2xy\,dz).
\]
Similarly,
\[
  \Bcal((KF)^2)=-m\bigl(xz\,dx-yz\,dy+(y^2-x^2)\,dz\bigr).
\]
These are precisely the two columns of $-mM_1$ after raising the
base index.

We next compute the Sylvester coefficients. We use the matrix identities
\begin{equation}\label{eq:basic-matrix-identities}
 \begin{gathered}
 M_0K=-M_1,\qquad M_1K=-M_2,\\
 \Gamma M_0=M_0,\qquad \Gamma M_1=M_1,\qquad
 \frac14P_0\Gamma M_0=N_0+M_3.
 \end{gathered}
\end{equation}
The $\Gamma$ identities follow from $\xi^TM_0=\xi^TM_1=0$. Direct multiplication by $P_0$ gives the final equality.

The Sylvester recursion uses
\[
  A_2=-\frac12(dx\otimes dx+dy\otimes dy), \qquad D_1=\frac m2K.
\]
Raise one index of $A_2$ with the round metric. If $Y$ represents a map, then the following matrix represents $A_2Y$:
\begin{equation}\label{eq:A2-coefficient-action}
 -\frac12P_0\Gamma Y.
\end{equation}

Using \eqref{eq:HF-expansion}, \eqref{eq:H-minus-half-expansion}, \eqref{eq:B-identities}, and \eqref{eq:basic-matrix-identities} in the definition of $C_s$, then applying the recursion \eqref{eq:X-zero-recursion}--\eqref{eq:X-two-recursion}, we obtain
\begin{align}
 C_0&=\frac{m-1}{2}M_0,
 &X_0&=\frac12M_0,
 \label{eq:C0-X0}\\
 C_1&=\frac{2m-1}{4}M_1,
 &X_1&=\frac14M_1,
 \notag\\
 C_2&=-\frac{5m+9}{16}M_2-N_0,
 &X_2&=-\frac{7m+9}{16(m-1)}M_2
       +\frac1{m-1}M_3.
 \label{eq:C2-X2}
\end{align}
For example, \eqref{eq:A2-coefficient-action} and \eqref{eq:basic-matrix-identities} give
\[
  A_2X_0=-\frac14P_0\Gamma M_0=-(N_0+M_3),
\]
which gives the term in $X_2$ arising from the horizontal correction $A_2$. Since $m\geq2$, every denominator $m-1$ is nonzero.

We finally compute the vertical trace and the required contractions. Put
\[
  \rho=u^2+2uv=u(u+2v), \qquad w=x(x^2-3y^2).
\]
The two-form convention \eqref{eq:two-form-convention} gives
\[
  \tr E_s =\frac12\langle e^{sK}F_s,F_s\rangle.
\]
The required two-form contractions are
\[
  \begin{aligned}
    |F|^2&=2-u-2v,\\
    \langle F,KF\rangle&=w,\\
    |KF|^2&=u|F|^2.
  \end{aligned}
\]
Expanding $\tr E_s$ gives
\begin{equation}\label{eq:e-coefficients}
 \tr E_s=e_0+se_1+s^2e_2+O_{C^\infty}(s^3),
\end{equation}
where
\begin{equation*}
   e_0=1-\frac u2-v,\quad
 e_1=-w,\quad
 e_2=\frac{\rho-2u}{8}.  
\end{equation*}

For maps in the coefficient representation,
\[
  \tr(C_j^*X_\ell)=\langle C_j,X_\ell\rangle_\Gamma.
\]
The six needed contractions reduce to
\begin{align}
 \langle M_0,M_0\rangle_\Gamma&=u+2v,
 &
 \langle M_1,M_0\rangle_\Gamma&=w,
 \notag\\
 \langle M_2,M_0\rangle_\Gamma&=\rho,
 &
 \langle N_0,M_0\rangle_\Gamma&=-u,
 \notag\\
 \langle M_1,M_1\rangle_\Gamma&=\rho,
 &
 \langle M_0,M_3\rangle_\Gamma&=u+\frac v2.
 \label{eq:basic-contractions}
\end{align}
For instance, the contribution from $C_0^*X_2$ to the
coefficient of $s^2$ is
\[
  \begin{aligned}
    \tr(C_0^*X_2)
    &=-\frac{7m+9}{32}\langle M_0,M_2\rangle_\Gamma
      +\frac12\langle M_0,M_3\rangle_\Gamma\\
    &=-\frac{7m+9}{32}\rho+\frac u2+\frac v4.
  \end{aligned}
\]
The full quadratic coefficient is
\[
  \tr(C_2^*X_0)+\tr(C_1^*X_1)+\tr(C_0^*X_2).
\]
Using~\eqref{eq:C0-X0}--\eqref{eq:C2-X2}
and~\eqref{eq:basic-contractions}, we obtain
\begin{align}
 \tr(C_s^*X_s)
 &=\frac{m-1}{4}(u+2v)+\frac{3m-2}{8}sw
 \notag\\
 &\quad+s^2\left(-\frac{2m+5}{8}\rho+u+\frac v4\right)
 +O_{C^\infty}(s^3).
 \label{eq:mixed-trace-expansion}
\end{align}

\subsection*{The choice of connection coefficient}
We verify~\eqref{eq:optimal-connection-coefficient}. Keep $H_s=e^{sK}$
and use $F_s=F+asKF$ for a constant $a\in\RR$. The same expansions
and divergence identities give
\[
  \begin{aligned}
    C_1(a)&=-\frac{2ma+m+1}{4}M_1,\\
    C_2(a)&=\left(\frac{a(m+2)}4+\frac{m+3}{16}\right)M_2
          +\left(a+\frac12\right)N_0,\\
    X_1(a)&=-\frac{2ma+2m+1}{4(m-1)}M_1.
  \end{aligned}
\]
The vertical quadratic coefficient is
\[
  e_2(a)=\left(a^2+2a+\frac12\right)
             \left(u-\frac\rho2\right).
\]
Substituting these expressions into the Sylvester
recursion~\eqref{eq:X-two-recursion} and using the contraction
table~\eqref{eq:basic-contractions} yields
\[
  \begin{aligned}
    q_2(a)&=(a^2+3a+1)u-\frac v4\\
    &\quad-\left(
      \frac{a^2}{2}+\frac{m+6}{4}a+\frac{m+7}{16}
      +\frac{(2ma+2m+1)^2}{16(m-1)}
      \right)\rho.
  \end{aligned}
\]
Averaging with~\eqref{eq:spherical-moments} and completing the
square proves~\eqref{eq:optimal-connection-coefficient}.

\end{document}